\newtheorem{lemma}{Lemma}[section]
\newtheorem{proposition}{Proposition}[section]
\newtheorem{theorem}{Theorem}[section]
\newtheorem{corollary}{Corollary}[section]
\newtheorem{definition}{Definition}[section]
\newtheorem{remark}{Remark}[section]
\def \Tr{{\rm Tr}}
\def \TR{{\rm TR}}
\def \Res{{\rm Res}}
\def \res{{\rm res}}
\def \A{\mathcal{A}_{\theta}}
\def\Int{\int\hspace{-0.35cm}{-} \,}
\title{The Curvature of the Determinant Line Bundle  on the  Noncommutative Two Torus}
\author{Ali Fathi, Asghar Ghorbanpour, Masoud Khalkhali} 
\date{}
\begin{document}

\maketitle

\begin{center} 

Department of Mathematics, The University of Western Ontario\\London, ON, Canada \footnote{{\it E-mail addresses}:  
afathiba@uwo.ca, aghorba@uwo.ca, masoud@uwo.ca}
 \\

\end{center}

\begin{abstract}
We compute the curvature of the determinant line bundle on  a family of Dirac operators for  a noncommutative two torus. Following Quillen's original construction for Riemann surfaces and using zeta regularized determinant of Laplacians,  one can endow the determinant line bundle  with a natural Hermitian metric. 
By using an analogue of  Kontsevich-Vishik  canonical trace,  defined on Connes' algebra of classical pseudodifferential symbols  for  the  noncommutative two torus, we compute the curvature form of the determinant line bundle by computing the second variation $\delta_{w}\delta_{\bar{w}}\log\det(\Delta)$. 
\end{abstract}

\tableofcontents

\section{Introduction}

In this paper we compute the curvature of the determinant line bundle associated to a family of Dirac operators on the noncommutative two torus. Following Quillen's pioneering work \cite{Quillen1985},  and using zeta regularized determinants,  one can endow the determinant line bundle over the space of Dirac operators on the noncommutative two torus with a natural Hermitian metric. Our result computes the curvature of the associated Chern connection on this holomorphic line bundle. In the noncommutative case the method of proof applied in \cite{Quillen1985} does not work and we had to use  a different strategy. To this end we found it very useful to extend the canonical trace of Kontsevich-Vishik \cite{Kontsevich-Vishik1995} to the  algebra of pseudodifferential operators on the  noncommutative two torus.

This paper is organized as follows. In Section 2 we review some standard  facts about  Quillen's determinant line bundle on the space of  Fredholm operators  from \cite{Quillen1985}, and  
  about  noncommutative two torus  that we need in this paper. 
In Section 3 we develop the tools that are needed in our   computation of the curvature of the determinant line bundle in the noncommutative case. 
We recall  Connes' pseudodifferential calculus and define  an analogue of the Kontsevich-Vishik  trace for  classical pseudodifferential symbols  on the noncommutative torus. 
A similar construction of the canonical trace can be found in \cite{Paycha-Levy2014}, where one works with the algebra of toroidal symbols. 
Section 4 is devoted to Cauchy-Riemann operators on $\A$ with a fixed complex structure.  This is the family of elliptic operators that we want to study its  determinant line bundle. 
In Section 5 using the  calculus of symbols and the canonical trace   we compute the curvature of determinant line bundle. Calculus of symbols and the canonical trace  allow us to bypass    local calculations  involving  Green functions   in \cite{Quillen1985}, which is  not applicable in our noncommutative case.   

The study of the conformal  and complex geometry of   the noncommutative two torus started with  the seminal  work  \cite{Connes-Tred2009} (cf. also  \cite{Connes-Cohen1992} for a preliminary version) where a Gauss-Bonnet theorem is proved for a  noncommutative two torus equipped with a conformally perturbed metric. 
This result was refined and extended in  \cite{Masoud-Farzad2012} where the Gauss-Bonnet theorem was  proved  for  metrics in all translation invariant conformal structures. 
 The  problem of  computing the scalar curvature of  the curved noncommutative two 
torus was fully settled  in \cite{Connes-Moscovici2014}, and, independently,  in \cite{Farzad-Masoud2013},  and in \cite{Farzad-Masoud2014} for the four dimensional case.  Other related works  include
\cite{Marcolli-Tanvir2012, Ali-Masoud2014, dabsit1,  dabsit2, Ros, Lesch2}.

\section{Preliminaries}
In this section we recall the definition of  Quillen's  determinant line bundle  over the space of Fredholm operators.  We also recall some basic notions about  noncommutative torus that we need in this paper.

\subsection{The determinant line bundle}

Unless otherwise  stated, in this paper by a Hilbert space  we mean a separable infinite dimensional Hilbert space over the field of complex numbers. Let $ \mathcal{F} = {\rm Fred} (\mathcal{H}_0, \mathcal{H}_1)$  denote the set of Fredholm operators between Hilbert spaces $\mathcal{H}_0$ and $ \mathcal{H}_1$. It is an open subset, with respect to norm topology,  in the complex Banach space of all bounded linear operators  between  $\mathcal{H}_0$ and $ \mathcal{H}_1$. The index map $ index : \mathcal{F}  \to \mathbb{Z}$ is a homotopy invariant and in fact defines a bijection between connected components of 
$\mathcal{F}$ and  the set of integers $\mathbb{Z}$.

  It is well known that $\mathcal{F}$ is a classifying space for $K$-theory: for any compact space $X$ we have a natural ring  isomorphism 
$$ K^0 (X) = [ X, \mathcal{F}]$$
between the $K$-theory of $X$ and the set of homotopy classes of continuous maps from $X$ to 
$\mathcal{F}.$ In other words,   homotopy classes of continuous families of Fredholm operators parametrized by $X$ determine the $K$-theory of $X$. 
It thus follows that  $\mathcal{F}$ is homotopy equivalent to $\mathbb{Z}  \times BU$, the latter being also a classifying space  for $K$-theory. Let   $\mathcal{F}_0$ denote the set of Fredholm operators   with index zero.  By 
 Bott periodicity, $\pi_{2j}(\mathcal{F})\cong \mathbb{Z}$ and  $\pi_{2j+1}(\mathcal{F}) = \{0\}$ for $j\ge 0$.
So by Hurewicz's theorem, $H^2(\mathcal{F}_0, Z) \cong \mathbb{Z}.$ Now the determinant line bundle ${\rm DET}$ defined below has the property that its  first Chern class,
$c_1({\rm DET}),$ is  a  generator of $H^2(\mathcal{F}_0, \mathbb{Z}) \cong \mathbb{Z}$. We refer to \cite{Scott2010, Chakraborty-Mathai2009}  and references therein for details.

In \cite{Quillen1985} Quillen defines a line bundle $\text{DET} \to \mathcal{F}$ such that for any $T \in  \mathcal{F}$
$${\rm DET}_T = \Lambda^{max}({\rm ker}(T))^* \otimes  \Lambda^{max}({\rm coker}(T)).$$
This is remarkable if we notice that ${\rm ker}(T)$ and ${\rm coker}(T)$ are not vector bundles due to  
discontinuities in their dimensions as $T$ varies within $\mathcal{F}$. Let us briefly recall the construction of 
this determinant line bundle DET. For each finite dimensional subspace $F$ of $ \mathcal{H}_1$ let 
$U_F = \{ T\in \mathcal{F}_1: {\rm Im}(T) + F = \mathcal{H}_1\}$ denote the set of Fredholm operators whose range is transversal to $F$. It is an open subset of $\mathcal{F}$ and we have an open cover
$ \mathcal{F} = \bigcup U_F$.

For $T\in U_F$,  the exact
sequence 
\begin{equation}\label{eqn:index1}
0\to {\rm ker}(T) \to T^{-1}F \stackrel{T}{\to} F \to {\rm coker}(T)\to 0
\end{equation}
shows that  the rank of
$T^{-1}F$ is constant when $T$ varies within a continuous family in  $  U_F$.
Thus we can define a  vector bundle  $\mathcal{E}^F\to U_F$
 by setting $\mathcal{E}^F_T = T^{-1}F.$ We can then define a line bundle  ${\rm DET}^F \to U_F$ by setting 
$$ {\rm DET}^F_T= \Lambda^{max} (T^{-1}F)^* \otimes   \Lambda^{max} F.$$
We can use the inner products on $\mathcal{H}_0$ and $\mathcal{H}_1$ to split the above  exact sequence  \eqref{eqn:index1} canonically  and get a canonical isomorphism 
$ {\rm ker}(T) \oplus F \cong T^{-1}F \oplus {\rm coker}(T)$.  Therefore
$$
\Lambda^{max}({\rm ker}(T))^* \otimes  \Lambda^{max}({\rm coker}(T))\cong
\Lambda^{max} (T^{-1}F)^* \otimes   \Lambda^{max} F.
$$
Now over each member of the cover  $U_F$   a line bundle
 ${\rm DET}^F \to U_F$ is defined.  Next one shows that over intersections $U_{F_1} \cap U_{F_2}$ there is an isomorphism ${\rm DET}^{F_1} \to {\rm DET}^{F_2} $  and moreover the isomorphisms satisfy a 
 cocycle condition  over triple intersections $U_{F_1} \cap U_{F_2} \cap U_{F_3}.$ This shows that the line bundles ${\rm DET}^F \to U_F$ glue together to define a line bundle over $\mathcal{F}$. It is further shown in \cite{Quillen1985} that this line bundle is holomorphic  as a bundle over an open subset of a complex  Banach space.

 It is tempting to think that since $c_1 (\text{DET})$ is the generator of $H^2(\mathcal{F}_0, \mathbb{Z}) \cong \mathbb{Z},$ there might exits  a natural Hermitian metric on DET  whose curvature 2-form would be a representative of this   generator. One problem is that the induced metric from   ${\rm ker}(T)$ and ${\rm ker}(T^*)$  on  DET is not even continuous.  
 In \cite{Quillen1985} Quillen shows that  for  families  of Cauchy-Riemann operators  on a Riemann surface one can correct the Hilbert space metric by  multiplying it by   zeta regularized determinant and in this way one obtains a smooth Hermitian metric on the induced determinant line bundle. In  Section 5 we  describe a similar construction  for noncommutative two torus.

\subsection{Noncommutative two torus}

 For  $\theta \in \mathbb{R}, $  the noncommutative two torus $A_{\theta}$ is   by definition
the universal unital $C^*$-algebra  generated by two unitaries $U, V$ satisfying
 \[VU=e^{2 \pi i \theta} UV.\]

There is a continuous action of $\mathbb{T}^2$, $\mathbb{T}= \mathbb{R}/2\pi \mathbb{Z}$, on $A_{\theta}$ by $C^*$-algebra
automorphisms  $\{ \alpha_s\}$, $s\in \mathbb{R}^2$, defined by
\[\alpha_s(U^mV^n)=e^{is.(m,n)}U^mV^n.\]
The space of smooth elements for this action will be denoted by
$A_{\theta}^{\infty}$. It is a dense subalgebra of $A_{\theta}$  which can be alternatively
described as the algebra of elements in $A_{\theta}$
whose (noncommutative) Fourier expansion has rapidly decreasing coefficients:
\[A_{\theta}^{\infty}=\left\{\sum_{m,n\in \mathbb{Z}}a_{m,n}U^mV^n: a_{m,n}\in\mathcal{S}(\mathbb{Z}^2)\right\}.\]
There is a  normalized, faithful and positive,  trace   $\varphi_0$  on $A_{\theta}$ whose restriction on smooth elements is given by
\[\varphi_0(\sum_{m,n\in \mathbb{Z}}a_{m,n}U^mV^n)=a_{0,0}.\]

 The algebra $A_{\theta}^{\infty}$   is equipped with the 
 derivations  $\delta_1, \, \delta_2: A_{\theta}^{\infty} \to A_{\theta}^{\infty}$, uniquely  defined by the relations 
\[\delta_1(U)=U, \,\, \delta_1(V)=0, \quad  \delta_2(U)=0, \,\, \delta_2(V)=V.\]
We have $\delta_j(a^*)= -\delta_j(a)^* $ for $j=1, 2$ and all $a\in A_{\theta}^{\infty}$.
 Moreover, the 
 analogue of the
integration by parts formula in this setting is given by:
\[ \varphi_0(a\delta_j(b)) = -\varphi_0(\delta_j(a)b), \,\,\, \forall a,b \in A_{\theta}^{\infty}. \]

We apply the GNS construction to $A_{\theta}$.  The state $\varphi_0$  defines an  inner product 
\[ \langle a, b \rangle = \varphi_0(b^*a), \,\,\, a,b \in A_{\theta}, \nonumber \]
and a pre-Hilbert structure on $A_{\theta}$. After completion we obtain  a  Hilbert space denoted 
$\mathcal{H}_0$. The derivations $\delta_1, \delta_2$, as densely defined unbounded
operators on $\mathcal{H}_0$,  are formally selfadjoint and have unique extensions to selfadjoint operators.

We introduce a complex structure associated with a complex number $\tau = \tau_1+i\tau_2, \, \tau_2 >0,$
by defining
\[ \bar{\partial} = \delta_1 + \tau \delta_2, \,\,\, \bar{\partial}^*=  \delta_1 + \overline{\tau} \delta_2. \]


Note that   $\bar{\partial}$ is an unbounded operator on  $\mathcal{H}_0$ and $\bar{\partial}^*$ is
 its formal adjoint. The analogue of the space of anti-holomorphic 1-forms on the ordinary two torus
is defined to be  $$\Omega^{0,1}_{\theta}=\left\{\sum a \bar{\partial} b\;, a,b
\in A_{\theta}^{\infty}\right\}.$$
Using the induced inner product from $\psi $, one can turn $\Omega^{0,1}_{\theta}$ into a Hilbert space which we denote by
$\mathcal{H}^{0,1}$.

\section{The canonical trace and noncommutative residue}
In this section we define an analogue of the canonical trace of Kontsevich and Vishik \cite{Kontsevich-Vishik1995} for the  noncommutative torus. Let us first recall the algebra of pseudodifferential symbols on the noncommutative torus \cite{Connes1980, Connes-Tred2009}.

 \subsection{Pseudodifferential calculus on $\A$}

  Using operator valued symbols, one can define an algebra of  pseudodifferential operators on  $A_{\theta}^{\infty}$. We shall use the notation
$\partial^{\alpha}=\frac{\partial^{\alpha_1}}{\partial \xi_1^{\alpha_1}}  \frac{\partial^{\alpha_2}}{\partial \xi_2^{\alpha_2}} $,  and $\delta^{\alpha}= \delta_1^{\alpha_1}\delta_2^{\alpha_2}, $  for a multi-index $\alpha= (\alpha_1, \alpha_2).$

\begin{definition}
For a real number $m$, a smooth map $\sigma: \mathbb{R}^2 \to A_{\theta}^{\infty}$ is  said
to be a symbol of order $m$, if for all non-negative integers $i_1, i_2, j_1,
j_2,$
\[ ||\delta^{(i_1, i_2)}  \partial^{(j_1, j_2)}  \sigma(\xi) ||
\leq c (1+|\xi|)^{m-j_1-j_2},\]
where $c$ is a constant, and if there exists a smooth map $k: \mathbb{R}^2 \to
A_{\theta}^{\infty}$ such that
\[\lim_{\lambda \to \infty} \lambda^{-m} \sigma(\lambda\xi_1, \lambda\xi_2) = k (\xi_1, \xi_2).\]
The space of symbols of order $m$ is denoted by ${\mathcal S}^m(\A)$.
\end{definition}
\begin{definition}\label{pseudodef}
To a symbol $\sigma$ of order $m$, one can associate an operator on $A_{\theta}^{\infty}$,
denoted by $P_{\sigma}$, given by
$$ P_{\sigma}(a) = \int \int e^{-is \cdot \xi} \sigma(\xi) \alpha_s(a) \,ds \,
d\xi.$$
Here, $d\xi=(2\pi)^{-2}d_L\xi$ where $d_L\xi$ is the Lebesgue measure on $\mathbb{R}^2$.
The operator $P_{\sigma}$ is said to be a pseudodifferential operator of order $m$.
\end{definition}
For
example, the differential operator $\sum_{j_1+j_2 \leq m } a_{j_1,j_2}
 \delta^{(j_1, j_2)} $ is associated with the symbol $\sum_{j_1+j_2 \leq m } a_{j_1,j_2}
\xi_1^{j_1} \xi_2^{j_2}$ via the above formula.

Two symbols $\sigma$, $\sigma'\in {\mathcal S}^m(\A)$  are said to be equivalent if and only if $\sigma-\sigma'\in {\mathcal S}^n(\A)$ for all integers $n$. The equivalence of the symbols will be denoted by  $\sigma \sim \sigma'$.

Let $P$ and $Q$ be pseudodifferential operators with the symbols
$\sigma$ and $\sigma'$ respectively. Then the adjoint $P^*$ and
the product $PQ$ are pseudodifferential operators with the following
symbols
\[
\sigma(P^*) \sim \sum_{ \ell = (\ell_1, \ell_2) \geq 0} \frac{1}{\ell ! }
\partial^{\ell} \delta^{\ell}
(\sigma(\xi))^*,
\]

\[
\sigma (P Q) \sim \sum_{\ell = (\ell_1, \ell_2) \geq 0} \frac{1}{\ell !}
\partial^{\ell} (\sigma (\xi))
\delta^{\ell}(\sigma'(\xi)).
\]

\begin{definition}

A symbol $\sigma\in\mathcal{S}^m(\mathcal{A}_{\theta})$ is called elliptic if $\sigma(\xi)$ is invertible for $\xi\neq0,$ and for some $c$
$$||\sigma(\xi)^{-1}||\leq c(1+|\xi|)^{-m},$$ for large enough $|\xi|$.
\end{definition}

A  smooth map $\sigma:\mathbb{R}^2\to \A$ is called a classical symbol of order $\alpha \in \mathbb{C}$  
if for any $N$ and each $0\leq j\leq N$ there exist $\sigma_{\alpha-j}:\mathbb{R}^2\backslash\{0\}\to\mathcal{A}_{\theta}$ positive homogeneous of degree $\alpha-j$,  and a symbol $\sigma^N\in\mathcal{S}^{\Re(\alpha)-N-1}(\mathcal{A}_{\theta})$, such that
 \begin{equation}
 \label{sigma}
\sigma (\xi)=\sum_{j=0}^{N}\chi(\xi)\sigma_{\alpha-j}(\xi)+\sigma^N(\xi)\quad\xi\in\mathbb{R}^2.
 \end{equation}
 Here  $\chi$ is a smooth cut off function on $\mathbb{R}^2$ which is equal to zero on a small ball around the origin,  and is equal to one outside the unit ball.
  It can be shown that the homogeneous terms in the expansion are uniquely determined by $\sigma$. 
 We denote the set of classical symbols of order $\alpha$ by $\mathcal{S}^{\alpha}_{cl}(\mathcal{A}_{\theta})$ and the associated classical pseudodifferential operators by $\Psi_{cl}^{\alpha}(\mathcal{A}_{\theta})$.

 The space of classical symbols $\mathcal{S}_{cl}(\mathcal{A}_{\theta})$ is equipped with a Fr\'{e}chet topology induced by the semi-norms
\begin{equation}\label{frechet}
p_{\alpha,\beta}(\sigma)=\sup_{\xi\in\mathbb{R}^2}(1+|\xi|)^{-m+|\beta|}||\delta^{\alpha}\partial^{\beta}\sigma(\xi)||.
\end{equation}

The analogue of the Wodzicki residue for classical pseudodifferential operators on the noncommutative torus is defined in \cite{Farzad-Wong2011}.
 \begin{definition}
 The Wodzicki residue of a classical pseudodifferential operator $P_{\sigma}$ is defined as
 $$\Res(P_{\sigma})=\varphi_0\left(\res (P_\sigma)\right),$$
where $\res(P_\sigma):=\int_{|\xi|=1}\sigma_{-2}(\xi)d\xi$. 
 \end{definition}
It is evident from its  definition that Wodzicki residue vanishes on differential operators and on non-integer order classical pseudodifferential operators.
 \subsection{The  canonical trace} 
 In what follows, we define the analogue of Kontsevich-Vishik trace \cite{Kontsevich-Vishik1995}  on non-integer order pseudodifferential operators on the noncommutative torus. For an alternative approach based on toroidal noncommutative symbols see \cite{Paycha-Levy2014}. For a thorough review of the theory in the classical case we refer to \cite{Paycha2012,Paycha-Scott2007}. First we show the existence of the so called cut-off integral for classical symbols.
 
\begin{proposition}\label{expansioninR}
 Let $\sigma\in\mathcal{S}_{cl}^{\alpha}(\mathcal{A}_{\theta})$ and $B(R)$ be the ball of radius $R$ around the origin. One has the following asymptotic expansion
 $$\int_{B(R)}\sigma(\xi)d\xi\sim_{R\rightarrow\infty}\sum_{j=0,\alpha-j+2\neq0}^{\infty}\alpha_j(\sigma)R^{\alpha-j+2}+\beta(\sigma)\log R+c(\sigma),$$
 where $\beta(\sigma)=\int_{|\xi|=1}\sigma_{-2}(\xi)d\xi$
 and the constant term in the expansion, $c(\sigma)$, is given by 
\begin{equation}\label{cutoffinexpansion}
\int_{\mathbb{R}^n}\sigma^N+\sum_{j=0}^{N}\int_{B(1)}\chi(\xi)\sigma_{\alpha-j}(\xi)d\xi-\sum_{j=0,\alpha-j+2\neq0}^N \frac{1}{\alpha-j+2}\int_{|\xi|=1}\sigma_{\alpha-j}(\omega)d\omega.
\end{equation}
 Here we have used the notation of (\ref{sigma}).
  \end{proposition}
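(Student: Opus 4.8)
The plan is to decompose $\int_{B(R)}\sigma$ according to the classical expansion \eqref{sigma} and to treat each genuinely homogeneous piece by passing to polar coordinates. Throughout, all integrals are $\A$-valued and are understood in the Fr\'echet topology generated by the seminorms \eqref{frechet}; the symbol estimates guarantee that every integral written below converges absolutely in this topology and that Fubini's theorem together with the polar decomposition of $d\xi$ apply to them. First I would fix an integer $N$ with $\Re(\alpha)-N+1<0$ and write, using \eqref{sigma},
$$\int_{B(R)}\sigma=\int_{B(R)}\sigma^{N}+\sum_{j=0}^{N}\int_{B(R)}\chi(\xi)\sigma_{\alpha-j}(\xi)\,d\xi.$$

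Since $\sigma^{N}\in\mathcal{S}^{\Re(\alpha)-N-1}(\A)$ with $\Re(\alpha)-N-1<-2$, the estimate $\|\sigma^{N}(\xi)\|\le c(1+|\xi|)^{\Re(\alpha)-N-1}$ makes $\sigma^{N}$ integrable over $\mathbb{R}^{2}$, so $\int_{B(R)}\sigma^{N}\to\int_{\mathbb{R}^{2}}\sigma^{N}$ with tail $\int_{\mathbb{R}^{2}\setminus B(R)}\sigma^{N}=O(R^{\Re(\alpha)-N+1})$, a quantity that decays faster than any fixed power of $R$ once $N$ is taken large. This produces the first summand of $c(\sigma)$ in \eqref{cutoffinexpansion} together with the error governing the asymptotic nature of the expansion.

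Next, for each homogeneous piece I would split $B(R)=B(1)\sqcup\bigl(B(R)\setminus B(1)\bigr)$. On $B(1)$ the contribution $\int_{B(1)}\chi(\xi)\sigma_{\alpha-j}(\xi)\,d\xi$ is independent of $R$ and yields the second summand of $c(\sigma)$. On the annulus $B(R)\setminus B(1)$ we have $\chi\equiv 1$, and the homogeneity $\sigma_{\alpha-j}(r\omega)=r^{\alpha-j}\sigma_{\alpha-j}(\omega)$ together with the polar decomposition $\xi=r\omega$ gives
$$\int_{B(R)\setminus B(1)}\sigma_{\alpha-j}=\Bigl(\int_{1}^{R}r^{\alpha-j+1}\,dr\Bigr)\int_{|\omega|=1}\sigma_{\alpha-j}(\omega)\,d\omega.$$
The radial integral is the crux: when $\alpha-j+2\neq 0$ it equals $\frac{R^{\alpha-j+2}-1}{\alpha-j+2}$, contributing the power $\alpha_{j}(\sigma)R^{\alpha-j+2}$ with $\alpha_{j}(\sigma)=\frac{1}{\alpha-j+2}\int_{|\omega|=1}\sigma_{\alpha-j}(\omega)\,d\omega$ and the third, $R$-independent, summand of $c(\sigma)$; when $\alpha-j+2=0$ it equals $\log R$, whose coefficient $\int_{|\omega|=1}\sigma_{-2}(\omega)\,d\omega$ is exactly $\beta(\sigma)$. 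Collecting the three groups of $R$-independent terms reproduces \eqref{cutoffinexpansion}, and letting $N\to\infty$ upgrades the identity to the claimed asymptotic expansion; uniqueness of asymptotic expansions then shows $c(\sigma)$ is independent of the chosen $N$.

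The radial computation itself is elementary once the pieces are separated, and the appearance of $\beta(\sigma)$ precisely in the resonant case $\alpha-j=-2$ is consistent with the fact that a $\log R$ term occurs only when $\alpha$ is an integer $\ge-2$. I expect the genuine obstacle to lie not in the integration but in the first paragraph's claims, namely that the $\A$-valued integrals converge and may be manipulated (Fubini, polar decomposition, passage to the limit $R\to\infty$) in the Fr\'echet topology. I would discharge this by reducing each assertion to a scalar estimate on the seminorms of \eqref{frechet}, via $\|\delta^{a}\partial^{b}\sigma(\xi)\|\le c\,(1+|\xi|)^{\Re(\alpha)-|b|}$, exactly as in the commutative theory of cut-off integrals.
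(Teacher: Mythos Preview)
Your proposal is correct and follows essentially the same approach as the paper: decompose $\sigma$ via \eqref{sigma}, treat the integrable remainder $\sigma^N$ directly, split each homogeneous piece over $B(1)$ and the annulus $B(R)\setminus B(1)$, and evaluate the annulus contribution in polar coordinates with the case distinction $\alpha-j+2\neq 0$ versus $\alpha-j+2=0$. Your additional remarks on the tail estimate for $\sigma^N$, on convergence in the Fr\'echet topology, and on independence of $N$ are more explicit than the paper's proof but do not alter the argument.
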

 \begin{proof}
  First, we write $\sigma (\xi)=\sum_{j=0}^{N}\chi(\xi)\sigma_{\alpha-j}(\xi)+\sigma^N(\xi)$
with large enough $N$, so that $\sigma^N$ is integrable. Then we have,
\begin{equation}\label{constant1}
\int_{B(R)}\sigma(\xi)d\xi=\sum_{j=0}^{N}\int_{B(R)}\chi(\xi)\sigma_{\alpha-j}(\xi)d\xi+\int_{B(R)}\sigma^N(\xi)d\xi.
\end{equation}
For $N>\alpha+1$, $\sigma^N\in\mathcal{L}^1(\mathbb{R}^2,\mathcal{A}_{\theta})$, so
\begin{equation}
\int_{B(R)}\sigma^N(\xi)d\xi\to\int_{\mathbb{R}^2}\sigma^N(\xi)d\xi,\quad R\to\infty.\nonumber
\end{equation}
Now for each $j\leq N$ we have
$$\int_{B(R)}\chi(\xi)\sigma_{\alpha-j}(\xi)d\xi=\int_{B(1)}\chi(\xi)\sigma_{\alpha-j}(\xi)d\xi+\int_{B(R)\backslash B(1)}\chi(\xi)\sigma_{\alpha-j}(\xi)d\xi.$$
Obviously $\int_{B(1)}\chi(\xi)\sigma_{\alpha-j}(\xi)d\xi<\infty$ and 
by using polar coordinates $\xi=r\omega $, and homogeneity of $\sigma_{\alpha-j}$, we have
\begin{equation}\label{constant2}
\int_{B(R)\backslash B(1)}\chi(\xi)\sigma_{\alpha-j}(\xi)d\xi=\int_{1} ^R r^{\alpha-j+2-1}dr\int_{|\xi|=1}\sigma_{\alpha-j}(\xi)d\xi.
\end{equation}
Note that the cut-off function is equal to one on the set $\mathbb{R}^2 \backslash B(1)$. 
For the term with  $\alpha-j=-2$ one has
\begin{equation}
\int_{B(R)\backslash B(1)}\chi(\xi)\sigma_{\alpha-j}(\xi)d\xi= \log R  \int_{|\xi|=1}\sigma_{\alpha-j}(\xi)d\xi.\nonumber
\end{equation}
The terms with $\alpha-j\neq -2$ will give us the following:
\begin{eqnarray}\label{constant3}
\int_{ B(R)\backslash B(1)}\chi(\xi)\sigma_{\alpha-j}(\xi)d\xi&=& \\
\frac{R^{\alpha-j+2}}{m-j+2}\int_{|\xi|=1}\sigma_{\alpha-j}(\xi)d\xi&-&\frac{1}{\alpha-j+2}\int_{|\xi|=1}\sigma_{\alpha-j}(\xi)d\xi.\nonumber
\end{eqnarray}
Adding all the constant terms in  \eqref{constant1}-\eqref{constant3}, we get the constant term given in \eqref{cutoffinexpansion}.
\end{proof}
\begin{definition} 
 The cut-off integral of a symbol $\sigma\in\mathcal{S}_{cl}^{\alpha}(\mathcal{A}_{\theta})$ is defined to be the constant term in the above asymptotic expansion, and we denote it by $\Int \sigma(\xi)d\xi$.
\end{definition}
  \begin{remark} Two remarks are in order here. First note that the cut-off integral of a symbol is independent of  the choice of $N$. Second, it is also independent of the choice of the cut-off function 
  $\chi$.   
    \end{remark}
 
 We now give the definition of the canonical trace for classical pseudodifferential operators on $\A$.
 \begin{definition}
 The canonical trace of a classical pseudodifferential operator $P\in\Psi^{\alpha}_{cl}(\mathcal{A}_{\theta})$ of non-integral order $\alpha$ is defined as 
 \begin{equation*}
 \TR(P):=\varphi_0\left(\Int \sigma_P(\xi)d\xi\right).
 \end{equation*}
 \end{definition}
 In the following, we establish the relation between the TR-functional and the usual trace on trace-class pseudodifferential operators.
Note that any pseudodifferential operator $P$  of order less that  $-2$, is a trace-class operator on $\mathcal{H}_0$ and its trace is given by
 $$\Tr(P)=\varphi_0\left(\int_{\mathbb{R}^2}\sigma_{P}(\xi)d\xi\right).$$
 On the other hand, for such operator the symbol is integrable and we have
\begin{equation}\label{TRTr}
\Int\sigma_P(\xi)=\int_{\mathbb{R}^2}\sigma_P(\xi)d\xi.
\end{equation}
 Therefore, the $\TR$-functional and operator trace coincide on classical pseudodifferential operators of order less than $-2$. 
 
Next, we show that the $\TR$-functional is in fact an analytic continuation of the operator trace and using this fact  we can prove that it is actually a trace.

 \begin{definition}
A family of symbols $\sigma(z)\in\mathcal{S}_{cl}^{\alpha(z)}(\A)$, parametrized by $z\in W\subset \mathbb{C}$, is called a holomorphic family if
\begin{itemize}
\item[i)]  The map $z\mapsto \alpha(z)$ is holomorphic. 
\item[ii)]  The map $z\mapsto \sigma(z)\in \mathcal{S}_{cl}^{\alpha(z)}(\mathcal{A}_{\theta})$ is a holomorphic map from $W$ to the Fr\'{e}chet space $\mathcal{S}_{cl}(\mathcal{A}^n_{\theta}).$  
\item[iii)] The map $z\mapsto \sigma(z)_{\alpha(z)-j}$ is   holomorphic for any $j$, where 
 \begin{equation}\label{sigma2}
 \sigma(z)(\xi)\sim\sum_j\chi(\xi)\sigma(z)_{\alpha(z)-j}(\xi)\in\mathcal{S}_{cl}^{\alpha(z)}(\mathcal{A}_{\theta}).
 \end{equation}
\item[iv)] The bounds of the asymptotic expansion of  $\sigma(z)$ are locally uniform with respect to $z$, i.e, for any $N\geq1$ and compact subset $K\subset W$, there exists a constant $C_{N,K,\alpha,\beta}$ such that for all multi-indices  $\alpha,\beta$ we have
$$\left|\left|\delta^{\alpha}\partial^{\beta}\left(\sigma(z)-\sum_{j<N}\chi\sigma(z)_{\alpha(z)-j}\right)(\xi)\right|\right|<C_{N,K,\alpha,\beta}|\xi|^{\Re(\alpha(z))-N-|\beta|}.$$
\end{itemize}

   A family $\left\{P_z\right\}\in\Psi_{cl}(\mathcal{A}_{\theta})$ is called holomorphic if $P_z=P_{\sigma(z)}$ for a holomorphic family of symbols $\left\{\sigma(z)\right\}$. 
  \end{definition}
 The following Proposition is an analogue of a result of Kontsevich and Vishik\cite{Kontsevich-Vishik1995}, for pseudodifferential calculus on noncommutative tori.
 \begin{proposition}\label{laurentofholo}
 Given a holomorphic family $\sigma(z)\in\mathcal{S}_{cl}^{\alpha(z)}(\mathcal{A}_{\theta})$, $z\in W \subset\mathbb{C}$, the map 
 $$z\mapsto \Int\sigma(z)(\xi)d\xi,$$
 is meromorphic with at most simple poles located in 
 $$P=\left\{z_0\in W;~\alpha(z_0)\in\mathbb{Z}\cap[-2,+\infty]\right\}.$$
 The residues at poles are given by
 $$\Res_{z=z_0} \Int\sigma(z)(\xi)d\xi=-\frac{1}{\alpha'(z_0)}\int_{|\xi|=1}\sigma(z_0)_{-2}d\xi.$$
 \end{proposition}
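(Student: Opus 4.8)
The plan is to feed the explicit description of the cut-off integral provided by Proposition \ref{expansioninR} into a term-by-term analysis of its dependence on the parameter $z$. Since meromorphy is a local property, I would fix an arbitrary compact set $K \subset W$ and, using that $\Re(\alpha(z))$ is bounded on $K$, select a single integer $N$ large enough that $\sigma(z)^N \in \mathcal{L}^1(\mathbb{R}^2, \A)$ for all $z \in K$ and that $j_0 := \alpha(z_0) + 2 \le N$ for every prospective pole $z_0 \in K \cap P$. On a neighborhood of $K$, and at those $z$ for which no denominator $\alpha(z) - j + 2$ vanishes, the cut-off integral $\Int \sigma(z)(\xi)\, d\xi$ is then given by the constant term \eqref{cutoffinexpansion}, with this fixed $N$; the value does not depend on $N$ by the remark following the definition of the cut-off integral.

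I would next argue that the first two groups of terms in \eqref{cutoffinexpansion} are holomorphic in $z$. For the remainder $\int_{\mathbb{R}^2}\sigma(z)^N(\xi)\, d\xi$, condition (ii) gives pointwise analyticity of $z \mapsto \sigma(z)^N$ into the Fr\'echet space of symbols, while the locally uniform decay estimate of condition (iv), with $N$ large, furnishes an integrable majorant over $K$; combining these (for instance by Morera's theorem applied to the $\A$-valued integral, testing against continuous linear functionals, or by differentiating under the integral sign) shows that $z \mapsto \int_{\mathbb{R}^2}\sigma(z)^N\, d\xi$ is holomorphic. For the terms $\int_{B(1)}\chi(\xi)\sigma(z)_{\alpha(z)-j}(\xi)\, d\xi$, condition (iii) gives holomorphy of each homogeneous component $z \mapsto \sigma(z)_{\alpha(z)-j}$, and since $\chi$ vanishes near the origin the integrand is smooth on the bounded set $B(1)$, so each such integral is holomorphic. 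Thus the entire pole structure is carried by the third sum.

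That sum is a finite sum of terms $-\frac{1}{\alpha(z)-j+2}\int_{|\xi|=1}\sigma(z)_{\alpha(z)-j}(\omega)\, d\omega$ whose numerators are holomorphic by condition (iii); each is therefore meromorphic, and can only be singular where the scalar $\alpha(z)-j+2$ vanishes, that is where $\alpha(z) = j-2$ with $0 \le j \le N$. These are precisely the points of $P$. Fixing $z_0 \in P$ and $j_0 = \alpha(z_0)+2$, only the $j = j_0$ term is singular at $z_0$; writing $\alpha(z)-j_0+2 = \alpha(z)-\alpha(z_0) = \alpha'(z_0)(z-z_0) + O((z-z_0)^2)$, one sees that (provided $\alpha'(z_0) \ne 0$) this term has a simple pole, with residue obtained by evaluating the holomorphic numerator at $z_0$ and dividing by $\alpha'(z_0)$. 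Since $\alpha(z_0)-j_0 = -2$, this yields
$$\Res_{z=z_0}\Int \sigma(z)(\xi)\, d\xi = -\frac{1}{\alpha'(z_0)}\int_{|\xi|=1}\sigma(z_0)_{-2}(\omega)\, d\omega,$$
as claimed, and establishes the meromorphy with at most simple poles located in $P$.

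I expect the principal technical obstacle to be the rigorous passage from the pointwise holomorphy supplied by condition (ii) to the holomorphy of the $\A$-valued integral $\int_{\mathbb{R}^2}\sigma(z)^N\, d\xi$, where one must control the tail of the integral uniformly on compacta via the estimate of condition (iv) and account for the integrand taking values in the Fr\'echet algebra $\A$ rather than in $\mathbb{C}$. Once analyticity of this vector-valued integral is secured, the remaining steps reduce to reading off the elementary poles of the finitely many terms that are rational in $\alpha(z)$.
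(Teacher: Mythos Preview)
Your proposal is correct and follows essentially the same approach as the paper: expand $\sigma(z)$ via Proposition~\ref{expansioninR}, identify the constant term \eqref{cutoffinexpansion}, and read off the poles and residues from the rational factors $1/(\alpha(z)-j+2)$ by Taylor expanding $\alpha(z)-\alpha(z_0)=\alpha'(z_0)(z-z_0)+o(z-z_0)$. The paper's own argument is considerably terser and does not spell out the holomorphy of the $\int_{\mathbb{R}^2}\sigma(z)^N$ and $\int_{B(1)}\chi\,\sigma(z)_{\alpha(z)-j}$ contributions; your added care in invoking conditions (ii)--(iv) to justify differentiating under the integral (or applying Morera) fills in exactly the details the paper leaves implicit, but the underlying strategy is the same.
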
 
 \begin{proof}
 By definition, one can write $\sigma(z)=\sum_{j=0}^N\chi(\xi)\sigma(z)_{\alpha(z)-i}(\xi)+\sigma(z)^N(\xi)$, and by Proposition \ref{expansioninR} we have,
\begin{align*}
\Int\sigma(z)(\xi)d\xi
&=\int_{\mathbb{R}^2}\sigma(z)^N(\xi)d\xi
+\sum_{j=0}^{N}\int_{B(1)}\chi(\xi)\sigma(z)_{\alpha(z)-j}(\xi)\\
&-\sum_{j=0}^{N}\frac{1}{\alpha(z)+2-j}\int_{|\xi|=1}\sigma(z)_{\alpha(z)-j}(\xi)d\xi.
\end{align*} 
 Now suppose $\alpha(z_0)+2-j_0=0$. By holomorphicity of $\sigma(z)$, we have $\alpha(z)-\alpha(z_0)=\alpha'(z_0)(z-z_0)+o(z-z_0)$. Hence
 $$\Res_{z=z_0} \Int\sigma(z)=\frac{-1}{\alpha'(z_0)}\int_{|\xi|=1}\sigma(z_0)_{-2}(\xi)d\xi.$$
 \end{proof}
 \begin{corollary}\label{analyticcontin}
 The functional $\TR$ is the analytic continuation of the ordinary trace on trace-class pseudodifferential operators.
 \end{corollary}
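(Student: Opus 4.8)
The plan is to exhibit $\TR$ and the operator trace $\Tr$ as two values of a single meromorphic function attached to a holomorphic family, and then invoke the identity principle. Concretely, I would start with a classical pseudodifferential operator $P$ of non-integral order and embed it into a holomorphic family $\{P(z)\}_{z\in W}$ of classical operators with holomorphic order $\alpha(z)$, normalized so that $P(z_1)=P$ for some $z_1\in W$ and so that $\Re(\alpha(z))$ drops below $-2$ on a nonempty open subset $W_{-}=\{z\in W:\Re(\alpha(z))<-2\}$. As in the classical Kontsevich--Vishik theory such a family is obtained by multiplying $P$ by the complex powers of a fixed positive-order elliptic operator; the defining conditions (i)--(iv) of a holomorphic family are exactly what is needed to run the argument below.

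On the region $W_{-}$ the operators $P(z)$ have order less than $-2$ and are therefore trace-class on $\mathcal{H}_0$. Here the two functionals agree termwise: by \eqref{TRTr} the cut-off integral coincides with the genuine integral, whence
\[
\TR(P(z))=\varphi_0\Big(\Int \sigma_{P(z)}(\xi)\,d\xi\Big)=\varphi_0\Big(\int_{\mathbb{R}^2}\sigma_{P(z)}(\xi)\,d\xi\Big)=\Tr(P(z)),\qquad z\in W_{-}.
\]
Next I would observe that $z\mapsto \Tr(P(z))$ is holomorphic on $W_{-}$: the symbol $\sigma_{P(z)}$ depends holomorphically on $z$ and, by the locally uniform bounds in condition (iv), one may differentiate under the integral sign and apply the continuous trace $\varphi_0$. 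Finally, Proposition \ref{laurentofholo} shows that $z\mapsto \varphi_0(\Int\sigma_{P(z)}(\xi)\,d\xi)=\TR(P(z))$ extends to a meromorphic function on all of $W$, with at most simple poles confined to the set where $\alpha(z)$ is an integer. Since this meromorphic function agrees with the holomorphic function $\Tr(P(z))$ on the open set $W_{-}$, the identity principle forces $\TR(P(z))$ to be the unique meromorphic continuation of $\Tr(P(z))$ from $W_{-}$ to $W$; evaluating at $z=z_1$ identifies $\TR(P)$ with this continuation.

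I expect the only genuinely technical point to be the holomorphicity of $z\mapsto\Tr(P(z))$ on $W_{-}$, i.e. the justification of differentiation under the integral sign; this is precisely where condition (iv) is used, supplying the $z$-locally-uniform, integrable majorants $|\xi|^{\Re(\alpha(z))-N-|\beta|}$ needed to dominate the $\xi$-integrand and its $z$-derivatives. A secondary point is that the resulting continuation is independent of the chosen interpolating family: any two such families agree on the trace-class region, so by uniqueness of analytic continuation on the connected parameter space they assign the same value to $\TR(P)$. Everything else reduces to the already-established facts \eqref{TRTr} and Proposition \ref{laurentofholo}.
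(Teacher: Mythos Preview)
Your argument is correct and follows essentially the same route as the paper: both use \eqref{TRTr} to identify $\TR$ with $\Tr$ in the trace-class range, invoke Proposition~\ref{laurentofholo} to obtain meromorphicity of $z\mapsto\Int\sigma(z)(\xi)d\xi$ (hence of $\TR$) along a holomorphic family, and then appeal to uniqueness of analytic continuation. The paper's version is terser---it phrases the continuation at the level of the symbol functional $\sigma\mapsto\Int\sigma(\xi)d\xi$ rather than through a specific interpolating family---but the content is the same; your only minor anachronism is invoking complex powers $Q^{-z}$ to build the family, which in the paper are introduced just after this corollary, though any holomorphic family with the stated properties suffices.
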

 \begin{proof}
 First observe that, by the above result, for a non-integer order holomorphic family of symbols $\sigma(z)$, the map $z\mapsto\Int\sigma(z)(\xi)d\xi$ is holomorphic.  Hence, the map $\sigma\mapsto\Int\sigma(\xi)d\xi$ is the unique analytic continuation of the map $\sigma\mapsto\int_{\mathbb{R}^2}\sigma(\xi)d\xi$ from $\mathcal{S}_{cl}^{<-2}(\mathcal{A}_{\theta})$ to $\mathcal{S}^{\notin\mathbb{Z}}_{cl}(\mathcal{A}_{\theta})$.  By \eqref{TRTr} we have the result.
\end{proof}

Let $Q\in\Psi^q_{cl}(\mathcal{A}_{\theta})$ be a positive elliptic pseudodifferential operator of order $q>0$. 
 The complex power of such an operator, $Q_\phi^z$, for $\Re(z)<0$  can be defined by the following Cauchy integral formula.
\begin{equation}\label{Qz}
Q_\phi^z=\frac{i}{2\pi}\int_{C_\phi}\lambda_\phi^z (Q-\lambda)^{-1} d\lambda.
\end{equation}
Here $\lambda^z_\phi$ is the complex power with branch cut $L_\phi =\{re^{i\phi}, r\geq 0\}$ and $C_\phi$ is a contour around the spectrum of $Q$ such that
$$C_\phi\cap {\rm spec}(Q)\backslash\{0\}=\emptyset,\qquad L_\phi\cap C_\phi=\emptyset,$$
$$ C_\phi\cap\{ {\rm spec}(\sigma(Q)^L(\xi)),\,\xi\neq 0\}=\emptyset.$$
In general an operator for which one can find a ray $L_\phi$ with the above property, is called an admissible operator with the spectral cut $L_\phi$. Positive elliptic operators are admissible and we take the ray $L_\pi$ as the spectral cut, and in this case we drop the index $\phi$ and write $Q^z$.

To extend \eqref{Qz} to $\Re(z)>0$ we choose a positive integer such that $\Re (z) <k$ and define 
$$Q_\phi^z:=Q^kQ_\phi^{z-k}.$$
It can be proved that this definition is independent of the choice of $k$.

\begin{corollary}
 Let $A\in\Psi ^{\alpha}_{cl}(\mathcal{A}_{\theta})$ be of order $\alpha\in\mathbb{Z}$ and let $Q$ be  a positive elliptic classical pseudodifferential operator of positive order $q$. We have
 $$\Res_{z=0}\TR(AQ^{-z})= \frac{1}{q}\Res(A).$$
 \end{corollary}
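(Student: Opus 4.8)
The plan is to exhibit $\{AQ^{-z}\}_{z}$ as a holomorphic family of classical pseudodifferential operators whose order is an affine function of $z$, and then to read off the residue at $z=0$ directly from Proposition \ref{laurentofholo} together with the definition of $\TR$. First I would establish that $\{Q^{-z}\}$ is a holomorphic family of symbols in the sense of conditions (i)--(iv) preceding Proposition \ref{laurentofholo}, of order $\alpha(z)=-qz$. This rests on the Cauchy integral \eqref{Qz}: one builds a parametrix for the resolvent $(Q-\lambda)^{-1}$ inside Connes' calculus, with symbol assembled from the resolvent of the principal symbol $\sigma(Q)^{L}(\xi)$, and checks that integrating $\lambda_{\phi}^{z}$ against it along $C_{\phi}$ produces a symbol depending holomorphically on $z$, with holomorphic homogeneous components and locally uniform bounds. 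In particular the leading term of the symbol of $Q^{-z}$ is $(\sigma(Q)^{L}(\xi))^{-z}$, of order $-qz$. Composing with the fixed operator $A$ and invoking the symbol product formula, the family $\{AQ^{-z}\}$ is then holomorphic of order $\alpha(z)=\alpha-qz$, so that $\alpha'(z)\equiv -q$.

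Next, writing $\sigma(z)$ for the symbol of $AQ^{-z}$, the definition of the canonical trace gives $\TR(AQ^{-z})=\varphi_{0}\!\left(\Int\sigma(z)(\xi)\,d\xi\right)$. Since $\varphi_{0}$ is a fixed continuous linear functional it commutes with taking the residue in $z$, and Proposition \ref{laurentofholo} applied at $z_{0}=0$, where $\alpha'(0)=-q$, yields
$$\Res_{z=0}\TR(AQ^{-z})=\varphi_{0}\!\left(-\frac{1}{\alpha'(0)}\int_{|\xi|=1}\sigma(0)_{-2}\,d\xi\right)=\frac{1}{q}\,\varphi_{0}\!\left(\int_{|\xi|=1}\sigma(0)_{-2}\,d\xi\right).$$
The decisive point is the identification $Q^{0}=I$, so that $AQ^{-z}\big|_{z=0}=A$ and the degree $-2$ homogeneous component $\sigma(0)_{-2}$ coincides with the degree $-2$ part of the symbol of $A$. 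Hence $\int_{|\xi|=1}\sigma(0)_{-2}\,d\xi=\res(A)$, and recalling $\Res(A)=\varphi_{0}(\res(A))$ gives $\Res_{z=0}\TR(AQ^{-z})=\frac{1}{q}\Res(A)$, as claimed.

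I expect the first step to be the main obstacle: verifying in full that $\{Q^{-z}\}$, and therefore $\{AQ^{-z}\}$, satisfies all of (i)--(iv), most delicately the locally uniform asymptotic bounds of condition (iv), requires a careful parametrix analysis of $(Q-\lambda)^{-1}$ in the noncommutative calculus and control of the contour integral uniformly in $z$. Once this holomorphy is secured, the remainder is merely bookkeeping of the affine order function and a substitution into Proposition \ref{laurentofholo}. I would also record the consistency of the degenerate case $\alpha<-2$: there $z=0$ is not in the pole set $P$, the left-hand side vanishes, and correspondingly $\sigma(0)$ carries no degree $-2$ homogeneous component, so $\Res(A)=0$; thus no separate treatment is needed.
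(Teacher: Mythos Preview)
Your proposal is correct and follows essentially the same route as the paper: apply Proposition \ref{laurentofholo} to the holomorphic family $\sigma(z)=\sigma(AQ^{-z})$ of order $\alpha(z)=\alpha-qz$, read off $\alpha'(0)=-q$ and $\sigma(0)_{-2}=\sigma_{-2}(A)$, and then apply $\varphi_{0}$ to pass from $\res(A)$ to $\Res(A)$. The paper simply takes the holomorphy of $\{AQ^{-z}\}$ for granted, whereas you correctly flag the verification of conditions (i)--(iv) as the only nontrivial step; your additional remarks on $Q^{0}=I$ and the degenerate case $\alpha<-2$ are consistent and not needed for the paper's terse argument.
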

 \begin{proof}
 For the holomorphic family $\sigma(z)=\sigma(AQ^{-z})$,  $z=0$ is a pole for the map  $z\mapsto\Int\sigma(z)(\xi)d\xi$ whose  residue is given by 
 $$\Res_{z=0}\left(z\mapsto \Int\sigma(z)(\xi)d\xi\right)=-\frac{1}{\alpha'(0)}\int_{|\xi|=1}\sigma_{-2}(0)d\xi=-\frac{1}{\alpha'(0)}\res(A).$$
Taking trace on both sides gives the result. 
 \end{proof}
 Now we can prove the trace property of $\TR$-functional.
 \begin{proposition}
 We have  
 $\TR(AB)=\TR(BA)$ for any $A,B\in\Psi_{cl}(\mathcal{A}_{\theta})$, provided that $ord(A)+ord(B)\notin\mathbb{Z}$.
 \end{proposition}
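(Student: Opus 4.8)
The plan is to prove the identity by analytic continuation from the trace-class range, exploiting the two facts already established: that $\TR$ is the canonical extension of the ordinary operator trace (Corollary \ref{analyticcontin}), and that the ordinary trace on trace-class operators is cyclic. Write $a=\mathrm{ord}(A)$ and $b=\mathrm{ord}(B)$, so that $a+b\notin\mathbb{Z}$ by hypothesis. Fix a positive elliptic classical operator $Q\in\Psi^{q}_{cl}(\A)$ of order $q>0$ and form the holomorphic family $A(z):=AQ^{-z}$, which has order $a-qz$. I would then study the single complex function
\[
 h(z):=\TR\!\big(A(z)B\big)-\TR\!\big(BA(z)\big),\qquad z\in\mathbb{C},
\]
whose value at $z=0$ is precisely $\TR(AB)-\TR(BA)$, the quantity we must show vanishes.

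First I would check that $\{A(z)B\}$ and $\{BA(z)\}$ are holomorphic families of classical operators in the sense of the definition preceding Proposition \ref{laurentofholo}. Since $\{Q^{-z}\}$ is a holomorphic family of order $-qz$ and multiplication on either side by the fixed operators $A$ and $B$ preserves holomorphy of a family, both qualify, and each has order $a+b-qz$. Proposition \ref{laurentofholo} then shows that $z\mapsto\TR(A(z)B)$ and $z\mapsto\TR(BA(z))$ are meromorphic on $\mathbb{C}$ with at most simple poles confined to the discrete set where $a+b-qz\in\mathbb{Z}\cap[-2,+\infty)$. Consequently $h$ is meromorphic, and because $a+b\notin\mathbb{Z}$ the point $z=0$ lies outside this pole set, so $h$ is holomorphic near $0$.

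Next I would analyze the region $\Re(z)\gg 0$. There the order $a+b-q\,\Re(z)$ drops below $-2$, so $A(z)B$ and $BA(z)$ are trace-class; by \eqref{TRTr} the functional $\TR$ coincides with the ordinary operator trace $\Tr$, and cyclicity of $\Tr$ gives $\Tr\!\big((AQ^{-z})B\big)=\Tr\!\big(B(AQ^{-z})\big)$, that is $h(z)=0$ on a right half-plane (which, being beyond all poles, is also a region of holomorphy for $h$). Since $h$ is meromorphic on $\mathbb{C}$ and vanishes on a nonempty open set, the identity theorem forces $h\equiv 0$; in particular $h$ has no pole at the origin and $h(0)=\TR(AB)-\TR(BA)=0$, which is the assertion.

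I expect the only genuinely delicate point to be the verification that left and right multiplication by a fixed classical operator sends a holomorphic family to a holomorphic family, so that Proposition \ref{laurentofholo} applies to $A(z)B$ and $BA(z)$; this amounts to checking clauses (ii)--(iv) of the definition of holomorphic family for the products, using the asymptotic product formula for symbols together with the locally uniform estimates built into the family $\{Q^{-z}\}$. Everything else---the meromorphicity, the location of the poles, and the trace-class cyclicity---follows directly from the results already in place.
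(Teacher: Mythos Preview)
Your argument is correct and follows essentially the same route as the paper: form a holomorphic family by multiplying by complex powers of a fixed positive elliptic $Q$, invoke ordinary trace cyclicity in the trace-class range, and analytically continue back to $z=0$. The only cosmetic difference is that the paper deforms both factors simultaneously via $A_z=AQ^{z}$ and $B_z=BQ^{z}$ (with $\Re(z)\ll 0$), whereas you deform only one factor by $A(z)=AQ^{-z}$; either variant works and the logic is the same.
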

\begin{proof}
 Consider the families $A_z=AQ^z$ and $B_z=BQ^z$  where $Q$ is an injective positive elliptic classical operator of order $q>0$.
 For $\Re(z)<<0$, the two families are trace class and $\Tr(A_zB_z)=\Tr(B_zA_z)$. By the uniqueness of the analytic continuation, we have
$$\TR(A_zB_z)=\TR(B_zA_z),$$ for those $z$ for which $2qz+{\rm ord}(A)+{\rm ord}(B)\not \in \mathbb{Z}.$ 
At $z=0$, we obtain $\Tr(AB)=\TR(BA).$
 \end{proof}
 
 \subsection{Log-polyhomogeneous symbols}  
 Proposition \ref{laurentofholo} can be extended and one can explicitly write down the Laurent expansion of the cut-off integral around each of the poles. The terms of the Laurent expansion involve residue densities of  $z$-derivatives of the holomorphic family. In general, $z$-derivatives of a classical holomorphic family of symbols is not classical anymore and therefore we introduce log-polyhomogeneous symbols which include the $z$-derivatives of the symbols of the holomorphic family $\sigma(AQ^{-z})$. 

\begin{definition}
A symbol $\sigma$ is called a log-polyhomogeneous symbol if it has the following form
\begin{equation}\label{logpolysym}
\sigma(\xi) \sim \sum_{j\geq 0}\sum_{l=0}^\infty\sigma_{\alpha-j,l}(\xi)\log^l|\xi|\quad |\xi|>0,
\end{equation}
with $ \sigma_{\alpha-j,l}$ positively homogeneous in $\xi$ of degree $\alpha - j$.
\end{definition}

 An important example  of an operator with such a symbol is $\log Q$ where $Q\in\Psi^q_{cl}(\mathcal{A}_{\theta})$ is a positive elliptic pseudodifferential operator of order $q>0$.
 The logarithm of $Q$ can be  defined by
 $$\log Q
 =Q\left.\frac{d}{dz}\right|_{z=0}Q^{z-1}
 =Q\left.\frac{d}{dz}\right|_{z=0}\frac{i}{2\pi}\int_C\lambda^{z-1}(Q-\lambda)^{-1}d\lambda.$$ 
 It is a pseudodifferential operator with symbol 
\begin{equation}\label{symboflog}\sigma(\log Q)
\sim \sigma(Q)\star \sigma\Big(\left.\frac{d}{dz}\right|_{z=0} Q^{z-1}\Big),
\end{equation}
where $\star$ denotes the products of the pseudodifferential symbols.  
 Using symbol calculus and homogeneity properties, we can show that \eqref{symboflog} is a log-homogeneous symbol of the form 
 $$\sigma(\log Q)(\xi)=2\log|\xi| I+ \sigma_{cl}(\log Q)(\xi),$$ 
 where  $\sigma_{cl}(\log Q)$ is a classical symbol of order zero. This symbol can be computed using the homogeneous parts of the classical symbol $\sigma(Q^z)=\sum_{j=0}^\infty b(z)_{2z-j}(\xi)$ and it is given by the following formula (see e.g. \cite{Paycha2012}).
  \begin{align}\label{cllog} \sigma_{cl}(\log Q)(\xi) &= \\ 
\sum_{k=0}^\infty \sum_{i+j+|\alpha|=k}\frac{1}{\alpha !}\partial^\alpha \sigma_{2-i}(Q)\delta^\alpha
& \left[|\xi|^{-2-j}\left.\frac{d}{dz}\right|_{z=0} b(z-1)_{2z-2-j}\left(\xi/|\xi|\right)\right]. \nonumber
  \end{align}
 
 The Wodzicki residue can also be extended to this class of pseudodifferential operators \cite{Lesch1999}. For an operator  $A$ with log-polyhomogeneous symbol as \eqref{logpolysym} it can be defined by
 $$\res(A)=\int_{|\xi|=1}\sigma_{-2,0}(\xi)d\xi.$$

By adapting the proof of Theorem 1.13 in \cite{Paycha-Scott2007} to the  noncommutative case, we have the following theorem which is written only for the families of the form $\sigma(AQ^{-z})$ which we  will use in Section \ref{sec:computation}. 
 
 \begin{proposition}\label{Laurentat0}
Let $A\in\Psi_{cl}^\alpha(\mathcal{A}_{\theta})$ and $Q$ be a positive , in general an admissible, elliptic pseudodifferential operator of positive order $q$. If $\alpha\in P$ then $0$ is a possible simple pole for the function $z\mapsto \TR(AQ^{-z})$ with the following Laurent expansion around zero. 
\begin{align*}
\TR(AQ^{-z})&=\frac{1}{q}\Res(A)\frac{1}{z}\\
 &+\varphi_0\left(\Int\sigma(A)- \frac{1}{q}\res(A\log Q)\right)-\Tr(A\Pi_Q)\\
& +\sum_{k=1}^K(-1)^k\frac{(z)^k}{k!} \\
&\times \left(\varphi_0\left( \Int\sigma(A(\log Q)^k)d\xi-\frac{1}{q(k+1)}\res(A(\log Q)^{k+1})\right)-\Tr(A\log^k Q\Pi_Q)\right)\\
&+o(z^{K}).
\end{align*}
Where $\Pi_Q$ is the projection on the kernel of $Q$.
\end{proposition}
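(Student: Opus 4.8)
The plan is to adapt the proof of Theorem 1.13 of \cite{Paycha-Scott2007} to Connes' calculus, using the explicit form of the cut-off integral from Proposition \ref{expansioninR}. Write $\alpha(z)=\alpha-qz$ for the order of the holomorphic family $\sigma(z):=\sigma(AQ^{-z})$, so that $\alpha'(z)=-q$. For $\Re(z)\gg0$ the operator $AQ^{-z}$ has order $<-2$, hence is trace class, and by \eqref{TRTr} together with Corollary \ref{analyticcontin} the function $z\mapsto\TR(AQ^{-z})=\varphi_0\!\left(\Int\sigma(z)(\xi)d\xi\right)$ is the analytic continuation of $z\mapsto\Tr(AQ^{-z})$. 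Thus the problem reduces to producing the Laurent expansion at $z=0$ of the symbol-level scalar $\Int\sigma(z)(\xi)d\xi$ and then applying $\varphi_0$ and the kernel corrections.

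First I would isolate the singular part. Substituting $\alpha(z)=\alpha-qz$ and $j_0:=\alpha+2\ge0$ into the explicit constant term \eqref{cutoffinexpansion}, every summand is holomorphic near $z=0$ except the one with $j=j_0$, for which $\alpha(z)+2-j_0=-qz$. Hence one may write
\begin{equation*}
\Int\sigma(z)(\xi)d\xi=\frac{1}{qz}\,g(z)+h(z),\qquad g(z):=\int_{|\xi|=1}\sigma(z)_{-2-qz}(\xi)\,d\xi,
\end{equation*}
with $g$ and $h$ holomorphic near $0$; holomorphicity and the legitimacy of differentiating under the integral sign are guaranteed by the locally uniform bounds in part (iv) of the definition of a holomorphic family. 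Since $g(0)=\int_{|\xi|=1}\sigma(A)_{-2}\,d\xi=\res(A)$, this already reproduces the leading pole $\frac1q\Res(A)\frac1z$ after applying $\varphi_0$.

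The heart of the argument is the identification of the Taylor coefficients of $g$ and $h$ with Wodzicki residues and cut-off integrals of the log-polyhomogeneous operators $A(\log Q)^k$. From $\frac{d}{dz}Q^{-z}=-(\log Q)Q^{-z}$ one obtains, at the level of symbols and using \eqref{symboflog}, the identity $\frac{d^k}{dz^k}\big|_{z=0}\sigma(z)=(-1)^k\sigma(A(\log Q)^k)$, whose right-hand side is log-polyhomogeneous of the form \eqref{logpolysym}. Differentiating the homogeneous component $\sigma(z)_{-2-qz}(\xi)=|\xi|^{-2-qz}\sigma(z)_{-2-qz}(\xi/|\xi|)$ produces powers of $-q\log|\xi|$, but these vanish on the unit sphere; comparing the $\log^0$ part of the degree $-2$ component then yields $g^{(k)}(0)=(-1)^k\res(A(\log Q)^k)$. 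Expanding $\frac{1}{qz}g(z)$ shows that the pole contributes $\frac{g^{(k+1)}(0)}{q(k+1)!}\,z^k=\frac{(-1)^{k+1}}{q(k+1)!}\res(A(\log Q)^{k+1})\,z^k$, which is exactly the $-\frac{1}{q(k+1)}\res(A(\log Q)^{k+1})$ term of the statement (note the shift $k\mapsto k+1$ and the combinatorial factor $\frac{(-1)^k}{k!}$). Simultaneously one must show that the holomorphic part satisfies $\frac{1}{k!}h^{(k)}(0)=\frac{(-1)^k}{k!}\Int\sigma(A(\log Q)^k)$, where $\Int$ now denotes the cut-off integral extended to log-polyhomogeneous symbols; this follows by differentiating the remaining regular summands of \eqref{cutoffinexpansion} in $z$ and recognizing them as the defining pieces of the cut-off integral of $\sigma(A(\log Q)^k)$. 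Assembling the pole, constant ($h(0)=\Int\sigma(A)$ and $\frac{g'(0)}{q}=-\frac1q\res(A\log Q)$), and $z^k$ contributions, then applying $\varphi_0$, produces every term except the kernel corrections.

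Finally, the projection terms come from reducing to the invertible case: replacing $Q$ by $Q+\Pi_Q$, which has the same symbol, one has $Q^{-z}=(Q+\Pi_Q)^{-z}-\Pi_Q$ from \eqref{Qz}, so that $\Tr(AQ^{-z})=\Tr\!\left(A(Q+\Pi_Q)^{-z}\right)-\Tr(A\Pi_Q)$ for $\Re(z)\gg0$; differentiating the smoothing discrepancy in $z$ yields the corrections $-\Tr(A\log^k Q\,\Pi_Q)$ at each order, which vanish when $Q$ is positive and are nonzero only through the nilpotent action of $Q$ on the generalized kernel in the general admissible case. The step I expect to be the main obstacle is the third one: setting up the cut-off integral on log-polyhomogeneous symbols and justifying, via the locally uniform bounds (iv), the interchange of $z$-differentiation with the sphere integration and with the homogeneous/logarithmic decomposition, so that the Taylor coefficients of $h$ genuinely reassemble into $\Int\sigma(A(\log Q)^k)$. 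Once these symbol-level identities are in place, the combinatorial bookkeeping that ties the pole part to the shifted residue $\res(A(\log Q)^{k+1})$ is delicate but essentially formal.
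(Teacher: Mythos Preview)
Your proposal is correct and is precisely the approach the paper takes: the paper does not spell out a proof but simply states that the result follows by adapting the proof of Theorem~1.13 in \cite{Paycha-Scott2007} to Connes' calculus for the family $\sigma(AQ^{-z})$, which is exactly your plan. Your detailed outline---isolating the singular summand $j_0=\alpha+2$ in \eqref{cutoffinexpansion}, identifying the Taylor coefficients of the spherical integral with $\res(A(\log Q)^k)$ and of the regular remainder with $\Int\sigma(A(\log Q)^k)$, and handling the kernel via $Q^{-z}=(Q+\Pi_Q)^{-z}-\Pi_Q$---fills in the steps the paper leaves implicit and matches the Paycha--Scott argument faithfully.
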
 

For  operators $A$ and $Q$ as   in the previous Proposition, 
we define a zeta function by
\begin{equation}\label{zetafunction}
\zeta(A,Q,z)=\TR(AQ^{-z}).
\end{equation}
By Corollary \ref{analyticcontin},  it is obvious that $\zeta(A,Q,z)$ is the analytic continuation of the zeta function $\Tr(AQ^{-z})$ defined by the regular trace only for $\Re(z)>>0$. 
\begin{remark}\label{holomorphicityatzero}
If $A$ is a differential operator,  the   zeta function \eqref{zetafunction} is holomorphic at $z=0$ with the value equal to 
$$\varphi_0\left(\Int\sigma(A)- \frac{1}{q}\res(A\log Q)\right)-\Tr(A\Pi_Q).$$
\end{remark}

\section{ Cauchy-Riemann operators on noncommutative tori}
  
In \cite{Quillen1985},  Quillen studies the geometry of the determinant line bundle on the space of all  Cauchy-Riemann operators on a smooth  vector bundle  on a closed Riemann surface. To investigate the same notion on noncommutative tori,  we first briefly recall some basic facts in the classical case on  how Cauchy-Riemann operators are related to Dirac operators and spectral triples. Then by analogy we  define our Cauchy-Riemann operator on $\A$, and consider the spectral triples defined by them.
 
 Let  $M$ be a compact complex manifold and $V$ be a smooth complex vector bundle on $M$. 
Let $\Omega^{p,q}(M,V)$ denote the space of $(p,q)$ forms on $M$ with coefficients in $V$.
 A $\bar\partial$-flat connection 
 on $V$ is a $\mathbb{C}$-linear map $D: \Omega^{0,0}(M,V)\to \Omega^{0,1}(M,V),$ such that  for any $f\in C^\infty(M)$ and  $u\in \Omega^{0,0}(M,V)$,
 \begin{equation}
 D(fu)=(\bar\partial f)\otimes u+ f Du,
\end{equation} 
and $D^2=0$. Here to define $D^2$,  note that 
any  $\bar\partial$-connection as above has a unique extension 
to an operator $D:\Omega^{p,q}(M,V)\to\Omega^{p,q+1}(M,V)$, defined by 
$$D(\alpha\otimes \beta)=\bar{\partial}\alpha \otimes u+(-1)^{p+q}\alpha\wedge Du, \quad \alpha\in \Omega^{p,q}(M),\,\, u\in C^\infty(V).$$  

We refer to  $\bar\partial$-flat connections as Cauchy-Riemann operators. 
 A holomorphic vector bundle $V$ has a canonical Cauchy-Riemann operator $\bar{\partial}_V:\Omega^{0}(M,V)\to \Omega^{0,1}(M,V)$, whose extension to $\Omega^{0,*}(M,V)$ forms the Dolbeault complex of $M$ with coefficients in $V$. In fact there is a one-one correspondence between Cauchy-Riemann operators on $V$ up to (gauge) equivalence, and holomorphic structures on $V$.  We denote by $\mathcal{A}$ the set of all Cauchy-Riemann operators on $V$.

Any holomorphic structure on a Hermitian vector bundle $V$ determines a unique Hermitian  connection, called the Chern connection, whose projection on $(0,1)$-forms, $\nabla^{0,1}(M, V)$, is the Cauchy-Riemann operator coming from the holomorphic structure.

Now, if $M$ is a K\"ahler manifold,  the tensor product of the Levi-Civita connection for $M$  with the Chern connection on $V$ defines a Clifford connection on the Clifford module $(\Lambda^{0,+}\oplus \Lambda^{0,-})\otimes V$  and the operator $D_0=\sqrt{2}(\bar\partial_V+\bar\partial_V^*)$ is the associated Dirac operator (see e.g. \cite{Gilkey1984}). 
Any other Dirac operator on the Clifford module $(\Lambda^{0,+}\oplus \Lambda^{0,-})\otimes V$ is of the form $D_0+A$ where $A$ is the connection one form of a Hermitian connection. 
 This connection need not be a Chern connection.  However, on a Riemann surface (with a Riemannian metric compatible with its complex structure) any Hermitian connection on a  smooth Hermitian vector bundle is the  Chern connection of a holomorphic structure on $V$. Therefore, the positive part of any Dirac operator on $(\Lambda^{0,0}\oplus \Lambda^{0,1})\otimes V$ is a Cauchy-Riemann operator,  and this gives a one to one correspondence between all Dirac operators and the  set of all Cauchy-Riemann operators.

Next we define the analogue of Cauchy-Riemann operators for the noncommutative torus. 
First, following \cite{Connes-Tred2009, Masoud-Farzad2012}, we fix a complex structure on $\A$ by a complex number $\tau$ in the upper half plane and construct the spectral triple
\begin{equation}\label{firstspec}
(\A, \mathcal{H}_{0}\oplus\mathcal{H}^{0,1},D_0=\left(\begin{array}{ll} 0 & \bar{\partial}^*\\ \bar{\partial}&0\end{array}\right)),
\end{equation}
where $\bar{\partial}:\A\to \A$ is given by $\bar\partial=\delta_1+\tau\delta_2$.  The Hilbert space $\mathcal{H}_0$ is obtained by GNS construction from $\A$ using the trace $\varphi_0$ and $\bar{\partial}^*$ is the adjoint of the operator $\bar{\partial}$.

As in the classical case, we define our Cauchy-Riemann operators on $\A$  as the positive part of twisted Dirac operators. All such operators  define  spectral triples of the  form
$$(\A, \mathcal{H}_{0}\oplus\mathcal{H}^{0,1},D_A=\left(\begin{array}{ll} 0 & \bar{\partial}^*+\alpha^*\\ \bar{\partial}+\alpha&0\end{array}\right)),$$
where $\alpha\in \A$ is the positive part of a selfadjoint element 
$$A=\left(\begin{array}{ll} 0 & \alpha^*\\ \alpha&0\end{array}\right)\in\Omega^1_{D_0}(\A).$$
We recall that $\Omega^1_{D_0}(\A)$ is the space of quantized one forms 
 consisting of the elements $\sum a_i[D_0, b_i]$ where $a_i,b_i\in\A$ \cite{Connesbook1994}.
Note that the in this case, the space  $\mathcal{A}$ of Cauchy-Riemann operators is the space of $(0,1)$-forms on $\A$.

We should mention that in the noncommutative case,  in  the work of  Chakraborty and Mathai \cite{Chakraborty-Mathai2009}   a general family of spectral triples is considered and, under suitable regularity conditions, a   determinant line bundle is defined for such families.  
The  curvature of the determinant line bundle  however is not computed  and that is the main object of study in the present paper, as well as in \cite{Quillen1985}.   

 \section{The curvature of the determinant line bundle for $\A$}\label{sec:computation}
For any $\alpha\in\mathcal{A}$,  the Cauchy-Riemann operator $$\bar{\partial}_{\alpha}=\bar{\partial}+\alpha:\mathcal{H}_0\to\mathcal{H}^{0,1}$$ is a Fredholm operator.
We  pull back the determinant line bundle DET on  the space of Fredholm operators  ${\rm Fred}(\mathcal{H}_0,\mathcal{H}^{0,1}),$ to get a line bundle $\mathcal{L}$ on $\mathcal{A}$. Following Quillen \cite{Quillen1985},  we define a Hermitian metric on $\mathcal{L}$ and compute its curvature in this section.  Let us define a metric on  the fiber 
$$\mathcal{L}_{\alpha} = \Lambda^{max} ({\rm ker} \,\bar{\partial}_{\alpha})^*\otimes \Lambda^{max}({\rm ker} \,\bar{\partial}_{\alpha}^*).$$ 
as the product of the induced metrics on $\Lambda^{max} ({\rm ker} \,\bar{\partial}_{\alpha})^* $,  $\Lambda^{max}({\rm ker} \,\bar{\partial}_{\alpha}^*)$, 
 with the zeta regularized determinant $e^{-\zeta'_{\Delta_{\alpha}}(0)}$.
Here we define   the Laplacian  as $\Delta_{\alpha}=\bar{\partial}_{\alpha}^*\bar{\partial}_{\alpha}:\mathcal{H}_0\to \mathcal{H}_0$,  and its zeta function by
\begin{align*}
&\zeta(z)=\TR(\Delta_{\alpha}^{-z}).
\end{align*}
It is a meromorphic function and by Remark \ref{holomorphicityatzero} it is regular at $z=0$ .
Similar proof as in \cite{Quillen1985} shows that this defines a smooth Hermitian metric on $\mathcal{L}$. 

On the open set of invertible  operators  each fiber of $\mathcal{L}$  is canonically isomorphic to $\mathbb{C}$ and the  nonzero holomorphic section $\sigma=1$  gives a trivialization. Also, according to the definition of the Hermitian metric, the norm of this section is given by 
\begin{equation}
\|\sigma\|^2=e^{-\zeta'_{\Delta_\alpha}(0)}.
\end{equation} 

\subsection{Variations of LogDet and curvature form}
We begin by explaining the motivation behind the computations of Quillen in  \cite{Quillen1985}. Recall that    a holomorphic line bundle equipped with a  Hermitian  inner product has a canonical  connection compatible with the two structures. This is also known as the Chern connection. The curvature form of this connection is computed by  $\bar{\partial} \partial \log\|\sigma\|^2,$ 
where $\sigma$  is any non-zero local holomorphic section.

In our case we will proceed by analogy and  compute the second variation 
$ \bar{\partial} \partial \log \|\sigma\|^2$
on the open set of invertible index zero Cauchy-Riemann operators. Let us  consider a holomorphic family of invertible index zero Cauchy-Riemann operators $D_w=\bar{\partial}+\alpha_w$, where $\alpha_w$ depends holomorphically on the complex variable $w$ and compute 
$$ \delta_{\bar{w}} \delta_w \zeta'_\Delta(0).$$ 
One has the following first variational formula,
\begin{equation*}
\delta_w\zeta(z)=\delta_w\TR(\Delta^{-z})
=\TR(\delta_w\Delta^{-z})
=-z\TR(\delta_w\Delta\Delta^{-z-1}),
\end{equation*} 
where in the second equality we were able to  change the order of $\delta_w$ and $\TR$ because of the uniformity condition in the definition of holomorphic families (cf. \cite{Paycha-Rosenberg2006}).  

Note that, although $\TR(\Delta^{-z})$ is regular at $z=0$,  
$\TR(\delta_w\Delta\Delta^{-z-1})$ might have a pole at $z=0$ since $\delta_w\Delta\Delta^{-z-1}|_{z=0}=\delta_w\Delta\Delta^{-1}$ is not a differential operator any more and may have non-zero residue. 
Around $z=0$ one has the following Laurent expansion:
$$-z\TR(\delta_w\Delta\Delta^{-z-1})=-z(\frac{a_{-1}}{z}+a_0+a_1 z+\cdots).$$
Hence, 
$$\left.\delta_w\zeta(z)\right|_{z=0}=-a_{-1},\qquad \left.\frac{d}{d z}\delta_w\zeta(z)\right|_{z=0}=-a_{0}.$$
Using Proposition \ref{Laurentat0} we have 
$$\delta_w\zeta'(0)=\left.\frac{d}{d z}\delta_w\zeta(z)\right|_{z=0}=-\varphi_0 \left(\Int\sigma(\delta_w\Delta\Delta^{-1})-\frac{1}{2}{\rm res}_x(\delta_w\Delta\Delta^{-1}\log\Delta)\right).$$
To compute the right hand side of the above equality,
 we need to note that since $D_w$ depends holomorphically on $w$, $\delta_wD^*=0$ and hence
$$\delta_w\Delta=\delta_w D^*D+ D^*\delta_w D=D^*\delta_w D.$$ 
Since $\delta_wD$ is a zero order differential operator, we have
\begin{align*}
\delta_w\zeta'(0)&=-\varphi_0\left(\Int\sigma(D^*\delta_w D\Delta^{-1})-\frac{1}{2}{\rm res}(D^*\delta_w D\Delta^{-1}\log\Delta)\right)\\
&=-\varphi_0\left(\Int\sigma(\delta_w D\Delta^{-1}D^*)-\frac{1}{2}{\rm res}(\delta_w D\log\Delta\,\Delta^{-1}D^*)\right)\\
&=-\varphi_0\left(\delta_w D\left(\Int\sigma(D^{-1})-\frac{1}{2}{\rm res}(\log\Delta\,D^{-1})\right)\right)\\
&=-\varphi_0\left(\delta_w D\, J\right),
\end{align*}   
where 
$$J=\Int\sigma(D^{-1})-\frac{1}{2}{\rm res}(\log(\Delta)D^{-1}).$$ 
The reader can compare this to the term $J$ in Quillen's computations \cite{Quillen1985}.

Now we compute the second variation 
$\delta_{\bar w}\delta_{w}\zeta'(0).$
Since $D_w$ is holomorphic we have
\begin{align*}
\delta_{\bar w}\delta_{w}\zeta'(0)&=-\varphi_0\left(\delta_w D \delta_{\bar w}J\right).
\end{align*}
Next we compute the variation  $\delta_{\bar{w}}J$. Note that since $D_w$ is invertible, $D_w^{-1}$ is also holomorphic and hence $\delta_{\bar{w}}\Int\sigma(D^{-1})=0$. Therefore 
\begin{align*}
 \delta_{\bar w}J&=\delta_{\bar{w}}\left( \Int\sigma(D^{-1})-\frac{1}{2}{\rm res}(\log\Delta\, D^{-1})\right)
 =-\frac{1}{2}\delta_{\bar{w}}{\rm res}(\log\Delta\, D^{-1}).
\end{align*}
 
Thus, we have shown that
 \begin{lemma}\label{secondvarnotsim} 
 For the holomorphic family of Cauchy-Riemann operators $D_w$, the second variation of $\zeta'(0)$ reads:
 $$\delta_{\bar w}\delta_{w}\zeta'(0)=\frac{1}{2}\varphi_0\left(\delta_w D\delta_{\bar{w}}{\rm res}(\log\Delta\,D^{-1})\right).$$
 \qed
 \end{lemma}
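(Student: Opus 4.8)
The plan is to derive the identity by differentiating the zeta function $\zeta(z)=\TR(\Delta^{-z})$ twice along the holomorphic family, first in $w$ and then in $\bar w$, reducing everything to the finite-part and residue functionals supplied by Proposition \ref{Laurentat0}.

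First I would compute the variation of $\zeta(z)$ in $w$. The uniform bounds in the definition of a holomorphic family permit interchanging $\delta_w$ with $\TR$, and the standard variation formula for complex powers gives $\delta_w\zeta(z)=-z\,\TR(\delta_w\Delta\,\Delta^{-z-1})$. The prefactor $z$ shows that only the pole and constant Laurent coefficients of $\TR(\delta_w\Delta\,\Delta^{-z-1})$ at $z=0$ enter: writing $\TR(\delta_w\Delta\,\Delta^{-z-1})=a_{-1}z^{-1}+a_0+O(z)$, one reads off $\delta_w\zeta'(0)=-a_0$. Now $\delta_w\Delta\,\Delta^{-1}$ is a classical operator of integer order $0$, so $z=0$ is a genuine pole and Proposition \ref{Laurentat0} applies with $A=\delta_w\Delta\,\Delta^{-1}$, $Q=\Delta$; since we work over invertible operators the projection $\Pi_\Delta$ vanishes, and I obtain
\[
\delta_w\zeta'(0)=-\varphi_0\!\left(\Int\sigma(\delta_w\Delta\,\Delta^{-1})-\tfrac12\,\res(\delta_w\Delta\,\Delta^{-1}\log\Delta)\right).
\]

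Next I would exploit holomorphy. Because $\alpha_w$ depends holomorphically on $w$ its adjoint depends anti-holomorphically, so $\delta_wD^*=0$ and hence $\delta_w\Delta=D^*\,\delta_w D$, where $\delta_w D$ is the zeroth-order operator of multiplication by $\delta_w\alpha\in\A$. Substituting this and moving $D^*$ to the far right turns the two terms into $\delta_w D\,\Delta^{-1}D^*=\delta_w D\,D^{-1}$ and $\delta_w D\,\log\Delta\,D^{-1}$; since $\delta_w D$ acts by a $\xi$-independent element it factors out of both the cut-off integral and the residue, yielding $\delta_w\zeta'(0)=-\varphi_0(\delta_w D\,J)$ with $J=\Int\sigma(D^{-1})-\tfrac12\,\res(\log\Delta\,D^{-1})$. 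The rearrangement of $D^*$ is the delicate step: the residue term is unproblematic because $\res$ is a genuine trace, but $\varphi_0\Int\sigma(\cdot)$ is cyclic only away from integer order. I would justify it by performing the permutation at generic (non-integer order) $z$ on $\TR(D^*\delta_w D\,\Delta^{-z-1})$, where $\TR$ is an honest trace, using the intertwining $f(\Delta)D^*=D^*f(DD^*)$ to recast the result in the standard form $\TR(\delta_w D\,D^{-1}(DD^*)^{-z})$, and only then reading off Laurent coefficients via the meromorphicity of Proposition \ref{laurentofholo}.

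Finally I would apply $\delta_{\bar w}$ to $\delta_w\zeta'(0)=-\varphi_0(\delta_w D\,J)$. Holomorphy makes $\delta_w D$ independent of $\bar w$, so the Leibniz rule leaves $\delta_{\bar w}\delta_w\zeta'(0)=-\varphi_0(\delta_w D\,\delta_{\bar w}J)$. Within $J$, invertibility of $D_w$ makes $w\mapsto D_w^{-1}$ holomorphic, whence $\delta_{\bar w}D^{-1}=0$ and the cut-off integral $\Int\sigma(D^{-1})$ is annihilated; thus $\delta_{\bar w}J=-\tfrac12\,\delta_{\bar w}\res(\log\Delta\,D^{-1})$, and substituting gives the claimed formula with the sign $+\tfrac12$. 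I expect the main obstacle to be exactly the pair of interchanges flagged above—commuting $\delta_w$ past $\TR$ and cyclically moving $D^*$ inside $\varphi_0\Int\sigma(\cdot)$ at integer order—both of which must be routed through generic non-integer order and the uniform estimates of the holomorphic-family hypothesis rather than applied naively.
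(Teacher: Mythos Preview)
Your proposal is correct and follows essentially the same route as the paper: differentiate $\TR(\Delta^{-z})$ in $w$, extract the constant Laurent coefficient via Proposition~\ref{Laurentat0}, use $\delta_w\Delta=D^*\delta_wD$ together with $\Delta^{-1}D^*=D^{-1}$ to reduce to $-\varphi_0(\delta_wD\,J)$ with $J=\Int\sigma(D^{-1})-\tfrac12\res(\log\Delta\,D^{-1})$, and then kill the cut-off integral term by holomorphy of $D_w^{-1}$. You are, if anything, more explicit than the paper about why the cyclic move of $D^*$ and the interchange of $\delta_w$ with $\TR$ are legitimate---your suggestion to route both through generic non-integer order before specializing is exactly the right safeguard, though the paper itself performs these steps without further comment.
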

Our next goal is to compute $\delta_{\bar{w}}{\res}(\log\Delta\,D^{-1})$. 
This combined with the above lemma shows that  the curvature  form of the determinant line bundle equals the K\"ahler form on the space of connections. 

 \begin{lemma}\label{secondvarsim} 
 With above definitions and notations,  we have
\begin{align*}
 \sigma_{-2,0}(\log\Delta\, D^{-1})&=\frac{(\alpha+\alpha^*)\xi_1+(\bar\tau\alpha+\tau\alpha^*)\xi_2}{(\xi_1^2+2\Re(\tau) \xi_1\xi_2+ |\tau|^2\xi_2^2)(\xi_1+\tau\xi_2) }\\ \\
 &-\log \left( \frac{\xi_1^2+2\Re(\tau) \xi_1\xi_2+ |\tau|^2\xi_2^2}{|\xi|^2}\right) \frac{\alpha}{\xi_1+\tau\xi_2},
 \end{align*}
 and
 $$\delta_{\bar{w}}{\rm res}(\log(\Delta) D^{-1})=\frac{1}{2\pi\Im(\tau)}(\delta_{{w}}D)^*.$$
  \end{lemma}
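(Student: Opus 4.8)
The plan is to compute the two displayed formulas in Lemma~\ref{secondvarsim} by a direct symbol calculation, exploiting the explicit form of $\Delta$ and the structure results already established for $\sigma_{cl}(\log Q)$ in equation~\eqref{cllog}. The first task is to identify the leading symbol of $\Delta=\bar\partial_\alpha^*\bar\partial_\alpha$. Writing $\bar\partial=\delta_1+\tau\delta_2$ and $\bar\partial_\alpha=\bar\partial+\alpha$, the principal symbol of $\bar\partial_\alpha$ is $\xi_1+\tau\xi_2$ and that of its adjoint is $\xi_1+\bar\tau\xi_2$, so the leading symbol of $\Delta$ is
\[
\sigma_2(\Delta)(\xi)=(\xi_1+\bar\tau\xi_2)(\xi_1+\tau\xi_2)=\xi_1^2+2\Re(\tau)\xi_1\xi_2+|\tau|^2\xi_2^2.
\]
I~will record the subleading symbol $\sigma_1(\Delta)$ as well, since the $\alpha$-dependent numerator in the Lemma comes precisely from the interaction between $\sigma_1(\Delta)$ and the $\log$-term; explicitly $\sigma_1(\Delta)=(\alpha+\alpha^*)\xi_1+(\bar\tau\alpha+\tau\alpha^*)\xi_2$ after collecting the cross terms in $(\xi_1+\bar\tau\xi_2+\alpha^*)(\xi_1+\tau\xi_2+\alpha)$. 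Having these in hand, I~would feed them into the general log-symbol formula~\eqref{cllog}, truncating to the homogeneous pieces that can contribute to the degree $-2$ part of the product $\log\Delta\cdot D^{-1}$.

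The second step is to assemble $\sigma_{-2,0}(\log\Delta\,D^{-1})$ from the star-product of $\sigma(\log\Delta)$ and $\sigma(D^{-1})$. Here $D=\bar\partial_\alpha$ has order one, so $D^{-1}$ is classical of order $-1$ with leading symbol $(\xi_1+\tau\xi_2)^{-1}$; and $\sigma(\log\Delta)=2\log|\xi|\,I+\sigma_{cl}(\log\Delta)$ splits into a pure $\log|\xi|$ term and a classical order-zero part. The $\log$-free homogeneous component of the product at degree $-2$ will produce the first line of the claimed formula (the rational term), while the $2\log|\xi|\,I$ piece combined with the $\log$-normalization built into $\sigma_{cl}(\log\Delta)$ produces the explicit logarithmic term $-\log\!\big((\xi_1^2+2\Re(\tau)\xi_1\xi_2+|\tau|^2\xi_2^2)/|\xi|^2\big)\,\alpha/(\xi_1+\tau\xi_2)$. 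The key simplification is that the normalization $|\xi|^{-2}\,\sigma_2(\Delta)$ appearing under the logarithm is exactly what measures the deviation of $\log\Delta$ from $\log|\xi|^2$, so the $|\xi|$-cutoffs cancel and leave the homogeneous logarithm shown.

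The third and final step is to take the variation $\delta_{\bar w}$ of $\res(\log\Delta\,D^{-1})=\varphi_0\!\big(\int_{|\xi|=1}\sigma_{-2,0}\,d\xi\big)$ and integrate over the unit circle. Since $D_w$ is holomorphic, $\delta_{\bar w}D=0$ but $\delta_{\bar w}D^*=(\delta_w D)^*\neq 0$, so only the $\alpha^*$-dependence of $\sigma_{-2,0}$ survives the variation; differentiating the first line of the Lemma kills the $\log$ term entirely and leaves the pieces proportional to $\alpha^*$, which after applying $\delta_{\bar w}$ become proportional to $(\delta_w D)^*$. What remains is a genuine angular integral: parametrizing $\xi=(\cos\psi,\sin\psi)$ and integrating the rational function $\frac{\xi_1^2+2\Re(\tau)\xi_1\xi_2+\cdots}{(\cdots)(\xi_1+\tau\xi_2)}$-type kernel over $\psi\in[0,2\pi]$ by residues should collapse to the constant $1/(2\pi\Im(\tau))$. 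The main obstacle, and the step demanding genuine care rather than routine bookkeeping, is controlling the noncommutativity in the star-product of the symbols: one must track the ordering of $\alpha$, $\alpha^*$ and the derivative terms $\delta^\ell$ arising in~\eqref{cllog} and in the composition rule, verifying that the contributions organize into exactly the $\varphi_0$-traced combination that gives $(\delta_w D)^*$ rather than some genuinely noncommutative residue. I~expect the homogeneity and the vanishing of Wodzicki-type residues on lower-order commutator terms to do most of the work, but the explicit angular integral evaluating to $(2\pi\Im(\tau))^{-1}$ is where the complex structure parameter $\tau$ enters decisively and must be checked by hand.
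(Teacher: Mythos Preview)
Your plan is correct and follows essentially the same route as the paper: compute $\sigma_2(\Delta)$ and $\sigma_1(\Delta)$, feed them into~\eqref{cllog} to get $\sigma_{0,0}(\log\Delta)$ and $\sigma_{-1,0}(\log\Delta)$, pair these with $\sigma_{-1}(D^{-1})=(\xi_1+\tau\xi_2)^{-1}$ and $\sigma_{-2}(D^{-1})=-(\xi_1+\tau\xi_2)^{-2}\alpha$ via the product formula, then vary and integrate.

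Two small corrections will save you effort. First, the noncommutativity concern you flag as the ``main obstacle'' is a non-issue here: the leading symbol $\sigma_2(\Delta)$ is scalar-valued, so all the $b_{-2-j}$, $b(z)_{2z-j}$, and the pieces of $\sigma_{cl}(\log\Delta)$ you need are built from scalars multiplying $\alpha,\alpha^*$ on one side; no genuine ordering ambiguity arises at the relevant homogeneity degrees, and no commutator-residue vanishing is needed. Second, after applying $\delta_{\bar w}$ to the first displayed formula you do not get a rational kernel still carrying $(\xi_1+\tau\xi_2)$ in the denominator: the surviving numerator is $(\delta_{\bar w}\alpha^*)\xi_1+\tau(\delta_{\bar w}\alpha^*)\xi_2=(\delta_{\bar w}\alpha^*)(\xi_1+\tau\xi_2)$, which cancels that factor exactly, leaving simply $(\xi_1^2+2\Re(\tau)\xi_1\xi_2+|\tau|^2\xi_2^2)^{-1}(\delta_w D)^*$. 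The angular integral is then the elementary $\int_{|\xi|=1}|\xi_1+\tau\xi_2|^{-2}\,d\xi=\tfrac{1}{2\pi\Im(\tau)}$ (with the paper's normalized measure), no contour integration required. Also note that $\res$ here is the $\A$-valued density $\int_{|\xi|=1}\sigma_{-2,0}\,d\xi$, without the trace $\varphi_0$; the second formula in the Lemma is an identity in $\A$.
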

  \begin{proof}
By writing down the homogeneous terms in the expansion of $\sigma_{\bullet,0}(\log\Delta)$ and $\sigma(D^{-1})$ and using the product formula of the symbols we see that
$$\sigma_{-2,0}(\log\Delta D^{-1})\sim\sigma_{-1,0}(\log\Delta)\sigma_{-1}(D^{-1})+\sigma_{0,0}(\log\Delta)\sigma_{-2}(D^{-1}).$$
  
 Starting with the symbol of $\Delta$, we have
  $$\sigma(\Delta)=\xi_1^2+2\Re(\tau) \xi_1\xi_2+ |\tau|^2\xi_2^2+(\alpha+\alpha^*)\xi_1+(\bar\tau\alpha+\tau\alpha^*)\xi_2+\bar{\partial}^*(\alpha).$$
  Then, the homogeneous parts of  $\sigma((\lambda-\Delta)^{-1})=\sum_{j} b_{-2-j}$ is given by the following recursive formula 
 \begin{align*}
 b_{-2}&=(\lambda-\sigma_{2}(\Delta))^{-1},\\
 b_{-2-j}&=-b_{-2}\sum_{k+l+|\gamma|=j, \, l<j}\partial^\gamma\sigma_{2-k}(\Delta)\delta^\gamma b_{-2-l}/\gamma!,
 \end{align*}
which gives us
 $$b_{-2}=\frac{1}{\lambda-(\xi_1^2+2\Re(\tau) \xi_1\xi_2+ |\tau|^2\xi_2^2)},$$
and
 $$b_{-3}=\frac{1}{(\lambda-(\xi_1^2+2\Re(\tau) \xi_1\xi_2+ |\tau|^2\xi_2^2))^2}\left((\alpha+\alpha^*)\xi_1+(\bar\tau\alpha+\tau\alpha^*)\xi_2\right).$$
 
Also, $\Delta^z$ is a classical operator defined by 
 $$\Delta^z=\frac{1}{2\pi i}\int_C \lambda^z(\lambda-\Delta)^{-1}d\lambda,$$
 with the homogeneous parts of the symbol given by 
 $$b(z)_{2z-j}:=\sigma_{2z-j}(\Delta^z)=\frac{1}{2\pi i}\int_C \lambda^zb_{-2-j}d\lambda.$$
 Hence we have 
\begin{align*}
b{(z)}_{2z}&=\frac{1}{2\pi i}\int_C \lambda^z\frac{1}{\lambda-(\xi_1^2+2\Re(\tau) \xi_1\xi_2+ |\tau|^2\xi_2^2)}d\lambda\\ \\
&=(\xi_1^2+2\Re(\tau) \xi_1\xi_2+ |\tau|^2\xi_2^2)^{z}
\end{align*}
\begin{align*}
b{(z)}_{2z-1}&=\frac{1}{2\pi i}\int_C \lambda^z\frac{\left((\alpha+\alpha^*)\xi_1+(\bar\tau\alpha+\tau\alpha^*)\xi_2\right)}{(\lambda-(\xi_1^2+2\Re(\tau) \xi_1\xi_2+ |\tau|^2\xi_2^2))^2}d\lambda \\ \\
&=z(\xi_1^2+2\Re(\tau) \xi_1\xi_2+ |\tau|^2\xi_2^2))^{z-1} \left((\alpha+\alpha^*)\xi_1+(\bar\tau\alpha+\tau\alpha^*)\xi_2\right).
\end{align*}

Using \eqref{cllog} and what we have computed up to here, it is clear that 
\begin{align*}
 \sigma_{0,0}(\log\Delta)(\xi)&= \sigma_{2}(\Delta)|\xi|^{-2}\left.\frac{d}{d z}\right|_{z=0} b{(z-1)}_{2z-2}\left(\xi/|\xi|\right)\\ \\
 &=\sigma_{2}(\Delta)|\xi|^{-2}\left.\frac{d}{d z}\right|_{z=0}( (\xi_1^2+2\Re(\tau) \xi_1\xi_2+ |\tau|^2\xi_2^2)/|\xi|^2)^{z-1}\\ \\
 &=\log ( (\xi_1^2+2\Re(\tau) \xi_1\xi_2+ |\tau|^2\xi_2^2)/|\xi|^2).
 \end{align*}
 Note that the above term is homogeneous of order zero in $\xi$.
 \begin{align*}
 & \sigma_{-1,0}(\log\Delta)(\xi)\\ \\
  &= \sum_{i+j+|\alpha|=1}\frac{1}{\alpha !}\partial^\alpha \sigma_{2-i}(\Delta)\delta^\alpha|\xi|^{-2-j}\left.\frac{d}{d z}\right|_{z=0} b{(z-1)}_{2z-2-j}\left(\xi/|\xi| \right)\\ \\
  &=  \sigma_{2}(\Delta)|\xi|^{-3}\left.\frac{d}{d z}\right|_{z=0}  b{(z-1)}_{2z-3}\left(\xi/|\xi|\right)\\
  &+ \sigma_{1}(\Delta)|\xi|^{-2}\left.\frac{d}{d z}\right|_{z=0}  b{(z-1)}_{2z-2}\left(\xi/|\xi|\right)\\ \\
    &=\frac{1-\log (\xi_1^2+2\Re(\tau) \xi_1\xi_2+ |\tau|^2\xi_2^2)/|\xi|^2)}{ (\xi_1^2+2\Re(\tau) \xi_1\xi_2+ |\tau|^2\xi_2^2)} \left[(\alpha+\alpha^*)\xi_1+(\bar\tau\alpha+\tau\alpha^*)\xi_2\right]\\
   & +\frac{ \log  (\xi_1^2+2\Re(\tau) \xi_1\xi_2+ |\tau|^2\xi_2^2)/|\xi|^2)}{\xi_1^2+2\Re(\tau) \xi_1\xi_2+ |\tau|^2\xi_2^2} \left[(\alpha+\alpha^*)\xi_1+(\bar\tau\alpha+\tau\alpha^*)\xi_2\right]
  \\ \\
  &= (\xi_1^2+2\Re(\tau) \xi_1\xi_2+ |\tau|^2\xi_2^2)^{-1} \left[(\alpha+\alpha^*)\xi_1+(\bar\tau\alpha+\tau\alpha^*)\xi_2\right].
  \end{align*}

 Next we compute the  symbol of $D^{-1}$. The  symbol of $D$ reads
 $$\sigma(D)={\xi_1+\tau \xi_2}+\alpha.$$ 
 We need to compute the homogeneous parts of  order -1 and -2 of $D^{-1}$. By using recursive formula for the symbol of the inverse we get:
 \begin{align*}
 \sigma_{-1}(D^{-1})&=\sigma_1(D)^{-1}=(\xi_1+\tau\xi_2)^{-1}\\
 \sigma_{-2}(D^{-1})&=- \sigma_{-1}(D^{-1})\sum_{k+|\gamma|=1}\partial^\gamma\sigma_{1-k}(D)\delta^\gamma  \sigma_{-1}(D^{-1})/\gamma!\\
 &=- \sigma_{-1}(D^{-1})^2\sigma_{0}(D)\\
  &=- (\xi_1+\tau\xi_2)^{-2}\alpha.
 \end{align*}
 Finally, we have
 \begin{align*}
 \sigma_{-2,0}(\log\Delta\, D^{-1})&=\sigma_{-1,0}(\log\Delta)\sigma_{-1}(D^{-1})+\sigma_{0,0}(\log\Delta)\sigma_{-2}(D^{-1})\\
 &=(\xi_1^2+2\Re(\tau) \xi_1\xi_2+ |\tau|^2\xi_2^2)^{-1}(\xi_1+\tau\xi_2)^{-1} \left[(\alpha+\alpha^*)\xi_1+(\bar\tau\alpha+\tau\alpha^*)\xi_2\right]\\
 &-\log ( (\xi_1^2+2\Re(\tau) \xi_1\xi_2+ |\tau|^2\xi_2^2)/|\xi|^2) (\xi_1+\tau\xi_2)^{-2}\alpha.
 \end{align*}
 Therefore, we compute the variation:
 \begin{align}\label{variation}
\delta_{\bar{w}} \sigma_{-2,0}(\log\Delta\,  D^{-1})&=(\xi_1^2+2\Re(\tau) \xi_1\xi_2+ |\tau|^2\xi_2^2)^{-1} \left[(\delta_{\bar w}\alpha^*)\xi_1+(\tau\delta_{\bar w}\alpha^*)\xi_2\right](\xi_1+\tau\xi_2)^{-1}\nonumber\\
&=(\xi_1^2+2\Re(\tau) \xi_1\xi_2+ |\tau|^2\xi_2^2)^{-1}(\delta_{\bar w}\alpha^*)\nonumber\\
&=(\xi_1^2+2\Re(\tau) \xi_1\xi_2+ |\tau|^2\xi_2^2)^{-1}(\delta_{{w}}D)^*.
\end{align}
In order to compute the variation of the residue density, we need to integrate (\ref{variation}) with respect to $\xi$ variable:
$$\delta_{\bar{w}}{\rm res}(\log(\Delta) D^{-1})=\int_{|\xi|=1}(\xi_1^2+2\Re(\tau) \xi_1\xi_2+ |\tau|^2\xi_2^2)^{-1}(\delta_{{w}}D)^*d\xi=\frac{1}{2\pi\Im(\tau)}(\delta_{{w}}D)^*.$$
Note that we have used the normalized Lebesgue measure in the last integral (see \eqref{pseudodef}).
\end{proof}

We record the main result of this paper in the following theorem. It computes the curvature of the determinant line bundle in terms of the natural K\"{a}hler form on the space of connections.
\begin{theorem}
The curvature of the determinant line bundle for the  noncommutative two torus is given by
\begin{equation}
\delta_{\bar w}\delta_{w}\zeta'(0)=\frac{1}{4\pi\Im(\tau)}\varphi_0\left(\delta_w D(\delta_w D)^*\right).
\end{equation}\qed
\end{theorem}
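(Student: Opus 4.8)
The plan is to obtain the curvature formula by directly combining the two preceding lemmas, since together they already isolate all the analytic content. Lemma \ref{secondvarnotsim} reduces the second variation $\delta_{\bar w}\delta_{w}\zeta'(0)$ to the single expression $\frac{1}{2}\varphi_0\bigl(\delta_w D\,\delta_{\bar w}\res(\log\Delta\,D^{-1})\bigr)$, trading the Laurent-coefficient bookkeeping of Proposition \ref{Laurentat0} for the variation of one residue density. Lemma \ref{secondvarsim} then evaluates that residue variation explicitly, yielding $\delta_{\bar w}\res(\log\Delta\,D^{-1})=\frac{1}{2\pi\Im(\tau)}(\delta_w D)^*$. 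With both of these in hand, nothing analytic remains.

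The remaining step is purely algebraic substitution. Inserting the value of $\delta_{\bar w}\res(\log\Delta\,D^{-1})$ from Lemma \ref{secondvarsim} into the formula from Lemma \ref{secondvarnotsim}, I would write
\begin{equation*}
\delta_{\bar w}\delta_{w}\zeta'(0)=\frac{1}{2}\varphi_0\Bigl(\delta_w D\cdot\frac{1}{2\pi\Im(\tau)}(\delta_w D)^*\Bigr),
\end{equation*}
and then pull the scalar $\frac{1}{2\pi\Im(\tau)}$ out of $\varphi_0$, combining it with the overall factor $\frac12$ to produce $\frac{1}{4\pi\Im(\tau)}$. This gives exactly the claimed identity, exhibiting the curvature as a positive multiple of $\varphi_0\bigl(\delta_w D(\delta_w D)^*\bigr)$.

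Because the theorem is an immediate corollary of the two lemmas, there is no genuine obstacle at this level: the difficulty has already been dispatched in establishing them. In particular, the hard part --- the explicit symbol computation of $\sigma_{-2,0}(\log\Delta\,D^{-1})$ and the angular integration over $\{|\xi|=1\}$ that produces the factor $2\pi\Im(\tau)$ --- lives entirely inside the proof of Lemma \ref{secondvarsim}. The one point I would make sure to check when writing this out is that the holomorphic dependence of $D_w$ on $w$ (already used to kill $\delta_{\bar w}\Int\sigma(D^{-1})$ in Lemma \ref{secondvarsim}) is compatible with reading $\delta_{\bar w}\alpha^*$ as $(\delta_w D)^*$, so that the right-hand side is manifestly a nonnegative Hermitian form; this confirms that the computed curvature agrees with the natural K\"ahler form on the space of Cauchy--Riemann operators.
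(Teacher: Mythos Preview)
Your proposal is correct and matches the paper's own argument exactly: the theorem is stated with a \qed and no separate proof, being an immediate substitution of Lemma~\ref{secondvarsim} into Lemma~\ref{secondvarnotsim}, precisely as you describe. The only minor inaccuracy is bibliographic rather than mathematical: the vanishing of $\delta_{\bar w}\Int\sigma(D^{-1})$ via holomorphicity is established in the discussion leading up to Lemma~\ref{secondvarnotsim}, not in Lemma~\ref{secondvarsim}.
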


\begin{remark}
In order to recover the classical result of Quillen for $\theta=0$, we have to take into account the change of the volume form due to a change of the metric. This means we have to  multiply the  above  result by $\Im(\tau)$.
\end{remark}

\def\polhk#1{\setbox0=\hbox{#1}{\ooalign{\hidewidth
  \lower1.5ex\hbox{`}\hidewidth\crcr\unhbox0}}} \def\cprime{$'$}

\end{document}